\newtheorem{theorem}{Theorem}[section]
\newtheorem*{theorem*}{Theorem}
\newtheorem{lemma}[theorem]{Lemma}
\newtheorem{proposition}[theorem]{Proposition}
\newtheorem*{proposition*}{Proposition}
\newtheorem{corollary}[theorem]{Corollary}
\newtheorem*{corollary*}{Corollary}
\newtheorem*{conjecture*}{Conjecture}
\newtheorem{question}[theorem]{Question}
\newtheorem*{question*}{Question}
\theoremstyle{definition}
\newtheorem{definition}[theorem]{Definition}
\newtheorem*{definition*}{Definition}
\newtheorem{remark}[theorem]{Remark}
\newtheorem{example}[theorem]{Example}
\newcommand{\N}{\mathbb{N}}
\newcommand{\Z}{\mathbb{Z}}
\DeclareMathOperator{\Hom}{Hom}
\DeclareMathOperator{\Aut}{Aut}
\DeclareMathOperator{\Out}{Out}
\DeclareMathOperator{\Inn}{Inn}
\DeclareMathOperator{\F}{\mathcal{F}}
\DeclareMathOperator{\FP}{\mathcal{FP}}
\DeclareMathOperator{\Stab}{Stab}
\newcommand{\cdq}{\mathrm{cd}_{\mathbb{Q}}}
\newcommand{\HF}{\mathbf{H}\mathfrak{F}}
\newcommand{\grwr}{\nwsebipropto}
\numberwithin{equation}{section}
\begin{document}

\title[Finiteness properties of stabilisers of oligomorphic actions]{Finiteness properties of stabilisers \\ of oligomorphic actions}
\date{\today}
\subjclass[2020]{Primary 20F65;   % GGT;
                 Secondary 20B07} % infinite permutation groups

\keywords{Finiteness properties, oligomorphic, Kropholler's hierarchy, wreath product, twisted Brin--Thompson group}

\author[F.~Fournier-Facio]{Francesco Fournier-Facio}
\address{Department of Pure Mathematics and Mathematical Statistics, University of Cambridge, UK}
\email{ff373@cam.ac.uk}

\author[P.~H.~Kropholler]{Peter H. Kropholler}
\address{School of Mathematical Sciences, University of Southampton, Southampton SO17 1BJ United Kingdom}
\email{p.h.kropholler@southampton.ac.uk}

\author[R.~A.~Lyman]{Robert Alonzo Lyman}
\address{Department of Mathematics and Computer Science, Rutgers University Newark, Newark NJ}
\email{robbie.lyman@rutgers.edu}

\author[M.~C.~B.~Zaremsky]{Matthew C.~B.~Zaremsky}
\address{Department of Mathematics and Statistics, University at Albany (SUNY), Albany, NY}
\email{mzaremsky@albany.edu}

\begin{abstract}
An action of a group on a set is oligomorphic if it has finitely many orbits of $n$-element subsets for all $n$. We prove that for a large class of groups (including all groups of finite virtual cohomological dimension and all countable linear groups), for any oligomorphic action of such a group on an infinite set there exists a finite subset whose stabiliser is not of type $\FP_\infty$. This leads to obstructions on finiteness properties for permutational wreath products and twisted Brin--Thompson groups. We also prove a version for actions on flag complexes, and discuss connections to the Boone--Higman conjecture. In the appendix, we improve on the criterion of Bartholdi--Cornulier--Kochloukova for finiteness properties of wreath products, and the criterion of Kropholler--Martino for finiteness properties of graph-wreath products.
\end{abstract}

\maketitle
\thispagestyle{empty}

\section{Introduction}

Let $G$ be a group acting on an infinite set $S$. Following Cameron \cite{cameron90}, we say that the action is \emph{oligomorphic} if there are finitely many orbits of $n$-element subsets for all $n$, or equivalently finitely many orbits of $n$-tuples of elements for all $n$. In this note we establish a restriction on the possible finiteness properties of stabilisers in $G$ of finite subsets of $S$ under such actions. 

Recall that Kropholler's hierarchy $\HF$ \cite{kropholler:HF} is the smallest class of groups that contains finite groups and has the property that if $G$ admits a cell-permuting action on a finite-dimensional contractible CW-complex with stabilisers in $\HF$, then $G$ is in $\HF$. This is a large class, which includes all groups of finite virtual cohomological dimension, of any cardinality, and all soluble and linear groups of cardinality less than or equal to $\aleph_\omega$. Moreover $\HF$ is subgroup closed, extension closed, and closed under the formation of fundamental groups of graphs of groups.

Our main result is the following.

\begin{theorem}
\label{intro thm action}
    Let $G$ be a group in $\HF$ acting on an infinite set $S$. If the action is oligomorphic, then there exists a non-empty finite subset of $S$ whose stabiliser in $G$ is not of type $\FP_\infty$.
\end{theorem}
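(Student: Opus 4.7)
The plan is to argue by contradiction, producing a strictly descending sequence of cohomological dimensions over $\Q$. Suppose that every stabiliser of a non-empty finite subset of $S$ is of type $\FP_\infty$. I will construct points $s_1, s_2, \ldots \in S$ such that the pointwise stabilisers
\[
H_k \;:=\; G_{(\{s_1, \ldots, s_k\})}
\]
satisfy $\cdq(H_{k+1}) < \cdq(H_k)$ for every $k \geq 1$, which is impossible for a sequence of non-negative integers.

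The points are chosen by exploiting oligomorphy. Because $S$ is infinite but has only finitely many $G$-orbits, at least one $G$-orbit is infinite; pick $s_1$ there. Having chosen $s_1, \ldots, s_k$, the subgroup $H_k$ fixes each $s_i$ and hence acts on the infinite set $S \setminus \{s_1, \ldots, s_k\}$. This restricted action is again oligomorphic: its orbits on $n$-element subsets of the complement are in finite-to-one correspondence with $G$-orbits on $(n+k)$-element subsets of $S$ containing $\{s_1, \ldots, s_k\}$ (the fibres are bounded by $k!$, the order of $\Sym\{s_1,\ldots,s_k\}$, coming from the finite quotient of the setwise by the pointwise stabiliser). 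Since finitely many $H_k$-orbits cover an infinite set, one of them must be infinite; choose $s_{k+1}$ from such an orbit, so that $[H_k:H_{k+1}] = \infty$.

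Now I run the descent. Each $H_k$ is commensurable with the setwise stabiliser of the non-empty finite set $\{s_1, \ldots, s_k\}$, hence of type $\FP_\infty$ by hypothesis, and lies in $\HF$ since $\HF$ is subgroup-closed. Invoking Kropholler's theorem that every $\HF$-group of type $\FP_\infty$ has finite cohomological dimension over $\Q$, each $H_k$ is of type $FP$ over $\Q$. Strebel's theorem for $FP$-groups over a field then yields
\[
\cdq(H_{k+1}) \;<\; \cdq(H_k)
\]
whenever $[H_k:H_{k+1}] = \infty$. Iterating produces the promised strictly decreasing sequence of non-negative integers, a contradiction.

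The main obstacle is the correct application of Kropholler's theorem: the argument requires the class $\HF \cap \FP_\infty$ to be contained in the class of groups of type $FP$ over $\Q$, and this is where the hypothesis $G \in \HF$ enters essentially—without the $\HF$ input, neither the finiteness of $\cdq$ at the top of the tower nor the Strebel descent can be started. If the precise form needed is not directly available in the literature, a natural substitute is to descend along another invariant (an equivariant cohomological dimension, or the height in the Kropholler hierarchy together with a Bieri–Eckmann-type inequality) known to be finite on $\HF \cap \FP_\infty$ and to strictly decrease along infinite-index $FP$-subgroups; some such input is indispensable.
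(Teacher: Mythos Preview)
Your argument has a genuine gap at the Strebel step. The strict inequality $\cdq(H_{k+1}) < \cdq(H_k)$ does \emph{not} follow from $[H_k:H_{k+1}] = \infty$ together with both groups being of type $FP$ over $\Q$. Strebel's theorem gives strict descent of cohomological dimension only for \emph{Poincar\'e duality} groups, not for arbitrary groups of type $FP$. A concrete counterexample already appears in cohomological dimension~$1$: take $G = F_2 = \langle a,b\rangle$ and $H = \langle a\rangle \cong \Z$. Both are of type $FP$ over $\Q$ with $\cdq = 1$, yet $[F_2:\Z] = \infty$. Your closing paragraph acknowledges some uncertainty here, but the problem is not that the required statement is hard to locate in the literature---it is false, and the proposed substitutes (height in the hierarchy, an equivariant cohomological dimension) are not known to strictly decrease along infinite-index inclusions of $\FP_\infty$ subgroups either. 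So as written the descent cannot be started, and no evident repair is available along these lines.

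For comparison, the paper does not attempt any descent inside $G$. It passes instead to the permutational wreath product $\Z \wr_S G$, observes that this group again lies in $\HF$ (closure under countable directed unions and extensions), and then applies Kropholler--Mislin: were $\Z \wr_S G$ of type $\FP_\infty$, it would admit a finite-dimensional classifying space for proper actions, forcing the torsion-free subgroup $\bigoplus_S \Z$ to have finite cohomological dimension---impossible for $S$ infinite. The characterisation of $\FP_\infty$ for wreath products then converts the failure of $\FP_\infty$ for $\Z\wr_S G$ into the existence of a finite subset with non-$\FP_\infty$ stabiliser. The point is that the wreath-product detour manufactures a subgroup of \emph{infinite} rational cohomological dimension in one stroke, rather than trying to squeeze out a one-step drop inside $G$, which (as the $\Z\le F_2$ example shows) need not occur.
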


In fact, the set in the conclusion of Theorem~\ref{intro thm action} can be taken to be arbitrarily large (Corollary~\ref{cor large sets}).
Recall that a group $G$ is of \emph{type $\FP_n$} if the trivial $G$-module $\Z$ has a projective resolution by $G$-modules that is finitely generated in dimensions up to $n$. For example $G$ is of type $\FP_1$ if and only if it is finitely generated,  and being of type $\FP_2$ is implied by finite presentability, but not vice versa \cite{bestvina97}. A group is of \emph{type $\FP_\infty$} if it is of type $\FP_n$ for all $n$. These properties are commensurability invariants \cite{alonso94}, hence talking about \emph{pointwise} or \emph{setwise} stabilisers of finite sets is equivalent.

Theorem~\ref{intro thm action} is quite surprising, since often strong finiteness properties for a group go hand in hand with strong transitivity properties (like oligomorphicity) together with strong finiteness properties of stabilisers. For example, every group acting oligomorphically on a set with stabilisers of non-empty finite subsets all of type $\FP_\infty$ is itself of type $\FP_\infty$ (see Lemma~\ref{lem:converse}). Thus in some sense Theorem~\ref{intro thm action} is a negation of the converse of this fact.

Also notice that Theorem~\ref{intro thm action} is specific to type $\FP_\infty$---there do exist groups of type $\FP_n$ in $\HF$ acting oligomorphically on infinite sets with stabilisers of finite subsets all of type $\FP_n$, e.g., the Houghton group $H_{n+1}$; see the proof of \cite[Corollary~G]{belk22}. The prototypical example of a group not in $\HF$ is Thompson's group $F$, and in fact $F$ acts oligomorphically on the dyadics in $(0, 1)$, with all stabilisers of type $\FP_\infty$. In Subsection~\ref{subsec question} we discuss this example further, and speculate about the existence of examples of a different nature (Question~\ref{question}).

\medskip

\textbf{Permutational wreath products and twisted Brin--Thompson groups.} We also obtain consequences for the finiteness properties of some group-theoretic constructions that take as input a group from $\HF$. This was in fact the initial motivation for this note. These include permutational wreath products, and also the twisted Brin--Thompson groups. Recall that the \emph{permutational wreath product} $B \wr_S G$, for $B$ a \emph{base} group and $S$ a set with an action of the group $G$, is the semidirect product $(\bigoplus\limits_S B)\rtimes G$, where $G$ shuffles the copies of $B$ via its action on $S$. Twisted Brin--Thompson groups were introduced by Belk and Zaremsky in \cite{belk22} following the Brin--Thompson groups in \cite{brin04}. We will need almost no details about them in this paper, but let us set up some notation and a little intuition for the groups. Given any group $G$ acting faithfully on a set $S$ we get a twisted Brin--Thompson group denoted $SV_G$. This is the group of all homeomorphisms of the product space $C^S$, with a copy of the Cantor set $C$ for each point in $S$, that ``locally'' resemble some combination of copies of Thompson's group $V$ acting on the individual Cantor sets together with the copy of $G$ permuting the Cantor sets via its action on $S$. See \cite{belk22,zar_taste} for far more detail.

\begin{theorem}
\label{intro thm constructions}
    Let $G$ be a group in $\HF$ acting on an infinite set $S$. Then:
    \begin{itemize}
        \item If $B$ is an infinite group, then the permutational wreath product $B \wr_S G$ is not of type $\FP_\infty$.
        \item The twisted Brin--Thompson group $SV_G$ (which exists only when the action is faithful) is not of type $\FP_\infty$.
    \end{itemize}
\end{theorem}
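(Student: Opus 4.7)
The plan is to deduce both statements from Theorem~\ref{intro thm action} by combining it with known necessary criteria that tie the finiteness properties of each construction back to those of finite-subset stabilisers in $G$. Both arguments are by contrapositive, and both begin the same way: applying Theorem~\ref{intro thm action} to the (oligomorphic) action of $G$ on $S$ produces a non-empty finite subset $F \subseteq S$ such that $\Stab_G(F)$ is not of type $\FP_\infty$. (If one wants the flexibility, Corollary~\ref{cor large sets} lets us take $F$ as large as we like, but this is not needed for the argument itself.)

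For the permutational wreath product $B \wr_S G$ with $B$ infinite, I would invoke the necessary direction of the Bartholdi--Cornulier--Kochloukova criterion, as sharpened in the appendix of this paper. That direction asserts that if $B \wr_S G$ is of type $\FP_\infty$ and $B$ is infinite, then $\Stab_G(F)$ must itself be of type $\FP_\infty$ for every finite $F \subseteq S$. Combined with the bad stabiliser produced above, this immediately gives the conclusion by contraposition.

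For the twisted Brin--Thompson group $SV_G$, I would invoke the Belk--Zaremsky characterisation from \cite{belk22}: for a faithful (oligomorphic) action of $G$ on $S$, the group $SV_G$ is of type $\FP_n$ if and only if the stabiliser in $G$ of every finite subset of $S$ is of type $\FP_n$. Specialising to $n = \infty$ and again contrapositively invoking the stabiliser $\Stab_G(F)$ from above yields the second bullet.

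The main obstacle is not in the overall shape of the argument, which reduces to two applications of the contrapositive once the criteria are in place, but in making sure that the required necessary direction is available in each case in sufficient generality. For $SV_G$ the needed statement appears as a clean bi-implication in \cite{belk22} and can essentially be used as a black box. For $B \wr_S G$ the picture is more delicate: the "only if" direction we require is the one whose sharpest form in the oligomorphic, $\HF$ setting comes from the appendix's refinement of BCK, rather than from the original statement of BCK, which is precisely why the appendix has been developed in parallel with this theorem.
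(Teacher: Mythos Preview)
Your proposal is circular within the paper's logical structure. You plan to deduce Theorem~\ref{intro thm constructions} from Theorem~\ref{intro thm action}, but the paper's dependency runs the other way: Theorem~\ref{intro thm action} is derived \emph{from} Theorem~\ref{intro thm constructions} (together with Corollary~\ref{wreath FPinf} and Lemma~\ref{lem:converse}). So invoking Theorem~\ref{intro thm action} here amounts to assuming what is to be proved. Note also that the statement of Theorem~\ref{intro thm constructions} does not assume the action is oligomorphic, so you cannot simply ``apply Theorem~\ref{intro thm action} to the (oligomorphic) action''; oligomorphicity would first have to be extracted from the $\FP_\infty$ hypothesis via the appendix criterion, which you do not make explicit.

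The paper's actual proof of Theorem~\ref{intro thm constructions} is direct and does not go through stabilisers or oligomorphicity at all. Assuming $B\wr_S G$ is of type $\FP_\infty$, one reduces to $B=\Z$ via Corollary~\ref{wreath FPinf}, then observes that $\Z\wr_S G$ lies in $\HF$ (closure under countable directed unions gives $\bigoplus_S\Z\in\HF$, then closure under extensions). By the Kropholler--Mislin theorem \cite{kropholler:mislin}, $\Z\wr_S G$ therefore admits a finite-dimensional classifying space for proper actions, so the torsion-free subgroup $\bigoplus_S \Z$ has finite cohomological dimension, forcing $S$ to be finite. The $SV_G$ statement then follows because, by \cite{zaremsky_fp_tbt}, if $SV_G$ is of type $\FP_\infty$ then so is $\Z\wr_S G$; the clean bi-implication for $SV_G$ that you attribute to \cite{belk22} is not available, and the relevant input comes from \cite{zaremsky_fp_tbt} rather than \cite{belk22}.
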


This result is also rather surprising, since the groups $G$ and $B$ themselves may have extremely nice finiteness properties much stronger than type $\FP_\infty$: for instance $B$ could just be the infinite cyclic group $\Z$, and $G$ could admit a finite classifying space. Moreover, there is by now a wealth of literature involving Thompson-like groups that are of type $\FP_\infty$, so the fact that $SV_G$ fails to be $\FP_\infty$ with only the assumptions that $S$ is infinite and $G$ is in $\HF$ is striking.

In the literature on finiteness properties of wreath products, the base is typically assumed to have infinite abelianisation in order for the proofs to work; see \cite{bartholdi15, kropholler:martino}. It turns out that it suffices to just assume that the base is infinite, and we cover this in Appendix~\ref{appendix}. In particular this is what allows us to avoid assuming that $B$ has infinite abelianisation in Theorem~\ref{intro thm constructions}.

\medskip

\textbf{Stronger hypotheses, stronger conclusions.} We also prove a variant of Theorem~\ref{intro thm action}, where we impose some additional restrictions on $G$ and get a stronger conclusion. Recall that a group has the \emph{Howson property} if the intersection of any two finitely generated subgroups is finitely generated \cite{howson54}. Also, let us say that a group is \emph{homologically supercoherent} if every finitely generated subgroup is of type $\FP_\infty$.

\begin{corollary}
\label{intro cor howson}
    Let $G$ be a group in $\HF$ acting on an infinite set $S$. Suppose $G$ is finitely generated, homologically supercoherent, and has the Howson property. If the action is oligomorphic, then there exists a point in $S$ whose stabiliser is not finitely generated.
\end{corollary}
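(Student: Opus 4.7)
The plan is to deduce this directly from Theorem~\ref{intro thm action}, using the three extra hypotheses on $G$ to transfer the obstruction from a finite-subset stabiliser to a single-point stabiliser. First I would apply Theorem~\ref{intro thm action} to obtain a non-empty finite subset $F = \{s_1, \dots, s_n\} \subseteq S$ whose stabiliser in $G$ is not of type $\FP_\infty$. The pointwise stabiliser $K = \bigcap_{i=1}^{n} \Stab_G(s_i)$ sits as the kernel of the natural permutation map $\Stab_G(F) \to \Sym(F)$, and therefore has finite index in the setwise stabiliser; since type $\FP_\infty$ is preserved in both directions under commensurability, $K$ itself fails to be of type $\FP_\infty$.

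Next I would argue by contradiction. Assume that every point stabiliser $\Stab_G(s_i)$ is finitely generated. Iterating the Howson property inside $G$---first intersecting $\Stab_G(s_1)$ with $\Stab_G(s_2)$, then the result with $\Stab_G(s_3)$, and so on, using at each step that an intersection of two finitely generated subgroups of $G$ is again finitely generated---shows that $K$ is itself finitely generated. Homological supercoherence of $G$ then forces the finitely generated subgroup $K \le G$ to be of type $\FP_\infty$, contradicting the previous paragraph. Hence at least one $\Stab_G(s_i)$ is not finitely generated, as required.

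The only modest subtlety I anticipate is the bookkeeping between the setwise and pointwise stabilisers of $F$; this is handled entirely by commensurability together with the standard invariance of type $\FP_\infty$ under passage to finite-index sub- and supergroups. Everything else is a formal deduction from the three extra hypotheses on $G$, so that the main content of the corollary really lies in Theorem~\ref{intro thm action}; the finite generation hypothesis on $G$ is not even needed beyond ensuring that the ambient group in which we apply Howson is reasonable.
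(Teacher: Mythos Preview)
Your proof is correct and follows essentially the same route as the paper's: assume for contradiction that all point stabilisers are finitely generated, use Howson to make finite-subset stabilisers finitely generated, then supercoherence to make them $\FP_\infty$, contradicting Theorem~\ref{intro thm action}. The only difference is that you make the setwise/pointwise bookkeeping explicit via the finite-index step, whereas the paper leaves this implicit; your observation that the finite-generation hypothesis on $G$ itself plays no real role is also accurate.
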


This conclusion is stronger than the conclusion of Theorem~\ref{intro thm action} in that the finite subset in question is a single point, and also that its stabiliser is not even finitely generated. These stronger hypotheses also lead to a stronger conclusion about wreath products and twisted Brin--Thompson groups; see Corollary~\ref{cor:howson_FP2}. Examples to which Corollary~\ref{intro cor howson} applies include limit groups (such as free and surface groups, Example~\ref{ex limit}) and locally quasiconvex hyperbolic groups (Example~\ref{ex locally quasiconvex}). However, limit groups are LERF (aka subgroup separable), and locally quasiconvex hyperbolic groups are LERF, conditionally on the residual finiteness of hyperbolic groups, which for these examples leads to a more elementary proof of a stronger fact; see Lemma~\ref{lem:lerf}. One non-LERF example satisfying the hypotheses of Corollary~\ref{intro cor howson} is the solvable Baumslag--Solitar group $BS(1, n), n > 1$, however in this case the conclusion can be obtained more directly; see Example~\ref{ex:bs}.

\medskip

\textbf{Actions on flag complexes.}
One can view an action on an (infinite) set $S$ as an action on the flag complex that is the ``full simplex'' on $S$, and the former is oligomorphic if and only if the latter has finitely many orbits of simplices per dimension. This can be generalised by considering an arbitrary action on an infinite-dimensional flag complex, and instead of permutational wreath products looking at graph-wreath products \cite{kropholler:martino}. In this direction, we prove the following:

\begin{theorem}
\label{intro thm flag}
    Let $G$ be a group in $\HF$. Suppose that $G$ acts simplicially on an infinite-dimensional flag complex $X$, cocompactly on each skeleton. Then either $G$ is not of type $\F_\infty$, or there exists a simplex $\sigma$ whose stabiliser in $G$ is not of type $\F_\infty$.
\end{theorem}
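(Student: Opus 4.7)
The plan is to reduce Theorem~\ref{intro thm flag} to Theorem~\ref{intro thm action} by extracting, via Ramsey theory, an infinite full subsimplex of $X$ on which a subgroup of $G$ acts oligomorphically. Assume for contradiction that $G$ is of type $\F_\infty$ and every simplex stabiliser is of type $\F_\infty$. Since $X$ is an infinite-dimensional flag complex, its $1$-skeleton contains an infinite clique, which spans an infinite full subsimplex $\Delta_0 \subseteq X$ by the flag property. Cocompactness on each skeleton gives finitely many $G$-orbits of $n$-simplices for each $n$, so the infinite Ramsey theorem applied diagonally in $n$ yields an infinite subclique $V \subseteq V(\Delta_0)$ on which all $n$-element subsets span simplices in a single $G$-orbit, for every $n$.

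Set $\Delta$ to be the full subsimplex on $V$ and $H = \Stab_G(\Delta)$; this subgroup lies in $\HF$ by subgroup closure of the hierarchy. A further inductive refinement---constructing the vertices of $V$ one at a time inside the intersection of the links of the previously chosen vertices, and using cocompactness of the current simplex stabiliser on that link together with another Ramsey pigeonhole---should arrange both that $H$ acts oligomorphically on $V$ and that, for each finite $F \subseteq V$, the simplex stabiliser $G_{\sigma_F}$ preserves $\Delta$ setwise, so that $\Stab_H(F) = G_{\sigma_F}$. Applying Theorem~\ref{intro thm action} to $H \in \HF$ acting oligomorphically on the infinite set $V$ then produces a finite $F \subseteq V$ whose $H$-stabiliser is not of type $\FP_\infty$; but this equals $G_{\sigma_F}$, which was assumed to be of type $\F_\infty$, a contradiction.

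The main obstacle is the inductive Ramsey construction of $V$: one must ensure each simplex stabiliser $G_{\sigma_F}$ preserves $\Delta$ setwise, so that the $H$-stabiliser and the full simplex stabiliser in $G$ coincide. Executing this likely requires, at each inductive stage, both a Ramsey extraction to absorb $G$-translations into the stabiliser of the previously chosen subsimplex, and control on how each stabiliser permutes its link. The distinction between $\F_\infty$ and $\FP_\infty$ in the conclusion is routine to address using commensurability invariance of $\FP_\infty$, since failure of $\FP_\infty$ for the stabiliser automatically yields failure of $\F_\infty$.
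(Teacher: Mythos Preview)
Your approach is entirely different from the paper's, and the central step---the ``inductive Ramsey construction of $V$''---is not actually carried out, only asserted to be possible. This is a genuine gap, and I do not see how to close it.

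The difficulty is precisely where you locate it. You need the setwise stabiliser $H=\Stab_G(\Delta)$ to act oligomorphically on $V$, and you need $\Stab_H(F)=G_{\sigma_F}$ for finite $F\subseteq V$. The latter forces $G_{\sigma_F}\subseteq H$: every element of $G$ fixing the finite simplex $\sigma_F$ must preserve the entire infinite clique $\Delta$ setwise. There is no Ramsey mechanism that produces this. At the stage where you have chosen $F$ and are about to choose the next vertex, the group $G_{\sigma_F}$ is already fixed; it acts with finitely many orbits on the link of $\sigma_F$, but nothing forces any single orbit to itself carry an infinite clique with the analogous property one level up, let alone to be preserved by all later stabilisers. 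Conversely, if you drop this requirement and merely use $\Stab_H(F)\le G_{\sigma_F}$, you cannot conclude: type $\FP_\infty$ does not pass to arbitrary subgroups, so failure of $\FP_\infty$ for $\Stab_H(F)$ says nothing about $G_{\sigma_F}$. As for oligomorphicity of $H$ on $V$, note that Ramsey only arranges that all $n$-subsets of $V$ lie in one $G$-orbit; the witnessing elements of $G$ have no reason to stabilise $\Delta$, so $H$ could act with infinitely many orbits---indeed $H$ could be trivial. (A smaller point: the existence of an infinite clique in $X$ is not automatic from ``infinite-dimensional flag complex''; you genuinely need cocompactness on skeleta together with a K\"onig's lemma argument, which you have not supplied.)

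For comparison, the paper's proof avoids all of this by going through graph-wreath products. Since $G\in\HF$ and $\Z^X$ is a countable directed union of right-angled Artin groups (hence in $\HF$), the graph-wreath product $\Z\grwr_X G$ lies in $\HF$. If it were of type $\F_\infty$, the Kropholler--Mislin theorem would force it to have a finite-dimensional model for $\underline{E}G$, so the torsion-free subgroup $\Z^X$ would have finite cohomological dimension---impossible, since $X$ contains simplices of every dimension and hence $\Z^X$ contains $\Z^k$ for all $k$. So $\Z\grwr_X G$ is not of type $\F_\infty$, and the characterisation of $\F_n$ for graph-wreath products (Theorem~\ref{graph-wreath Fn}) then forces either $G$ or some simplex stabiliser to fail $\F_\infty$. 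No Ramsey extraction is needed; the infinite-dimensionality of $X$ is used only through the single observation $\Z^{k+1}\le\Z^X$.
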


\begin{theorem}
\label{intro thm graph-wreath}
    Let $G$ be a group in $\HF$. Suppose that $G$ acts simplicially on an infinite-dimensional flag complex $X$. If $B$ is an infinite group, then the graph-wreath product $B \grwr_X G$ is not of type $\F_\infty$.
\end{theorem}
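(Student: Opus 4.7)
The plan is to argue by contradiction, reducing the statement to Theorem~\ref{intro thm flag}. Suppose, for contradiction, that the graph-wreath product $B \grwr_X G$ is of type $\F_\infty$. I would invoke the Kropholler--Martino criterion for finiteness properties of graph-wreath products \cite{kropholler:martino}, in the strengthened form provided by Appendix~\ref{appendix}; this appendix replaces the original hypothesis that $B$ have infinite abelianisation by the weaker hypothesis that $B$ be merely infinite, which is precisely the generality needed here. The relevant direction of that criterion translates the assumption that $B \grwr_X G$ is of type $\F_\infty$ into three simultaneous conditions on the action of $G$ on $X$: (i) $G$ is itself of type $\F_\infty$; (ii) $G$ acts cocompactly on every skeleton of $X$; and (iii) the stabiliser in $G$ of every simplex of $X$ is of type $\F_\infty$.

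Granted these three facts, Theorem~\ref{intro thm flag} applies directly to $G$ and $X$, since $G$ lies in $\HF$ and acts cocompactly on each skeleton of the infinite-dimensional flag complex $X$ by (ii). Its conclusion forces either $G$ or some simplex stabiliser to fail to be of type $\F_\infty$, in direct contradiction with (i) and (iii). This yields the desired conclusion that $B \grwr_X G$ cannot be of type $\F_\infty$.

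The main obstacle is therefore not in the body of the argument, which is essentially a one-line combination of two black boxes, but in the appendix: upgrading the Kropholler--Martino criterion so that it detects cocompactness on skeleta together with $\F_\infty$ of $G$ and of simplex stabilisers from $\F_\infty$ of $B \grwr_X G$ for arbitrary infinite base $B$. This upgrade is analogous to the one that lets Theorem~\ref{intro thm constructions} (on permutational wreath products) be deduced from Theorem~\ref{intro thm action}, and the same tactic should work: namely, finding a way to bypass the infinite-abelianisation hypothesis, presumably by replacing $B$ with an auxiliary group or module that retains the finiteness-theoretic information needed for the Mayer--Vietoris type spectral sequence underlying the original criterion.
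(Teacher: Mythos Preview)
Your proposal reduces Theorem~\ref{intro thm graph-wreath} to Theorem~\ref{intro thm flag}, but in the paper the dependency runs in the opposite direction: Theorem~\ref{intro thm flag} is deduced \emph{from} Theorem~\ref{intro thm graph-wreath} (combined with the appendix criterion, Theorem~\ref{graph-wreath Fn}). So as written your argument is circular. The same reversal appears in your final paragraph: in the paper, Theorem~\ref{intro thm constructions} is proved directly first and Theorem~\ref{intro thm action} is then deduced from it, not the other way around.

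The substantive content you are missing is exactly the direct argument the paper gives for Theorem~\ref{intro thm graph-wreath}. Assuming $B\grwr_X G$ is of type $\F_\infty$, the appendix (Theorem~\ref{graph-wreath Fn}) is used only to pass from $B$ to $\Z$. One then observes that $\Z^X$ is a countable directed union of right-angled Artin groups, which are linear and hence in $\HF$; closure under countable directed unions gives $\Z^X\in\HF$, and closure under extensions gives $\Z\grwr_X G\in\HF$. Now Kropholler--Mislin \cite{kropholler:mislin} produces a finite-dimensional classifying space for proper actions of $\Z\grwr_X G$, so the torsion-free subgroup $\Z^X$ has finite cohomological dimension. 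Since each $k$-simplex of $X$ contributes a copy of $\Z^{k+1}$ inside $\Z^X$, this forces $X$ to be finite-dimensional. This argument never touches cocompactness on skeleta or simplex stabilisers, and in particular does not invoke Theorem~\ref{intro thm flag}.
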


If $G$ additionally satisfies the conditions of Corollary~\ref{intro cor howson} then we get the analogous conclusions in the flag complex and graph-wreath product realm (Corollary~\ref{flag howson}). Note that here the homological finiteness properties from the previous results are replaced with homotopical finiteness properties. Recall that a group $G$ is of \emph{type $\F_n$} if it admits a classifying space with finite $n$-skeleton, and it is \emph{type $\F_\infty$} if it is of type $\F_n$ for all $n$. This is because we rely on results from \cite{kropholler:martino}, which are only proved for the homotopical finiteness properties. As the authors do in that paper, we conjecture that the homological version of their results holds, and then a homological version of Theorems~\ref{intro thm flag} and~\ref{intro thm graph-wreath} would follow.

\medskip

\textbf{Connections to the Boone--Higman conjecture.} The \emph{Boone--Higman conjecture} predicts that every finitely generated group with solvable word problem embeds in a finitely presented simple group \cite{boone74}. Twisted Brin--Thompson groups have recently been used extensively to prove instances of the Boone--Higman conjecture \cite{bbmz_survey}. They are convenient targets into which to try and embed groups, since they are always simple \cite{belk22}, and there is a precise characterisation of when they are finitely presented \cite{zaremsky_fp_tbt}. Since $G$ embeds in $SV_G$, using twisted Brin--Thompson groups to prove the Boone--Higman conjecture for a given group often amounts to embedding in a group $G$ acting nicely on a set $S$.

In particular, Belk, Fournier-Facio, Hyde and Zaremsky recently proved that the group $\Aut(F_n)$ embeds into a group $G$ admitting an action on a set $S$ such that the twisted Brin--Thompson group $SV_G$ is finitely presented, so $\Aut(F_n)$ satisfies the Boone--Higman conjecture \cite{BFFHZ}. More precisely, $G = \Aut_\Gamma(\Gamma * F_n)$, where $\Gamma$ is a finitely presented simple mixed identity-free group, and $S = \Hom_\Gamma(\Gamma * F_n, \Gamma)$, with the action by precomposition. See Section~\ref{sec:bhc} for more explanation of the notation and terminology.

Thanks to our results, we obtain the following surprising corollary.

\begin{corollary}
\label{Burger Mozes}
    With the notation above, suppose that $\Gamma$ is a simple Burger--Mozes group. Then $SV_G$ is finitely presented, but not of type $\FP_\infty$.
\end{corollary}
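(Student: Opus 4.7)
The plan is to deduce both halves of the corollary from results already at hand. Finite presentability of $SV_G$ is the content of the BFFHZ construction, which applies whenever $\Gamma$ is a finitely presented simple mixed identity-free group; a simple Burger--Mozes group satisfies these hypotheses, so the first half is immediate. The substantive content is the failure of type $\FP_\infty$, which I obtain by applying Theorem~\ref{intro thm constructions} to the pair $(G, S)$. For this I must verify three things: (a) $S$ is infinite; (b) the action of $G$ on $S$ is oligomorphic; and (c) $G \in \HF$.

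Conditions (a) and (b) are built into the BFFHZ setup and transfer without change. The infiniteness and oligomorphicity of the action of $G$ on $S$ are used there to set up $SV_G$ and to establish its finite presentability, and neither property depends on the specific choice of the finitely presented simple mixed identity-free group $\Gamma$. So the only real work is in (c).

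To verify (c), I would first observe that a simple Burger--Mozes group $\Gamma$ is a torsion-free cocompact lattice in a product of automorphism groups of two locally finite trees, so it acts freely and cocompactly on a product of two trees and in particular has a finite classifying space. Hence $\Gamma$ has finite cohomological dimension and lies in $\HF$. I would then extract from BFFHZ the structural description of $G = \Aut_\Gamma(\Gamma * F_n)$ implied by mixed identity-freeness of $\Gamma$: every $\Gamma$-fixing automorphism of the free product $\Gamma * F_n$ is determined by where it sends each free generator $x_i$, and the resulting normal form should realise $G$ as an extension with kernel a direct product of finitely many copies of $\Gamma$ and quotient (a subgroup of) $\Aut(F_n)$. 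Since $\Aut(F_n)$ has finite virtual cohomological dimension, it too lies in $\HF$. As $\HF$ is closed under finite direct products, subgroups, and extensions, this places $G$ in $\HF$. Combined with (a) and (b), Theorem~\ref{intro thm constructions} yields that $SV_G$ is not of type $\FP_\infty$.

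The main obstacle is precisely (c): unpacking the structure of $\Aut_\Gamma(\Gamma * F_n)$ finely enough to place it in $\HF$. Everything else is bookkeeping once the BFFHZ framework is invoked, and the failure of $\FP_\infty$ then comes directly from our main theorem on oligomorphic actions of groups in $\HF$.
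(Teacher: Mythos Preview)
Your overall plan is exactly the paper's: verify finite presentability via the BFFHZ criterion, then deduce failure of type $\FP_\infty$ from Theorem~\ref{intro thm constructions} once $G\in\HF$. The gap is precisely where you flag it, but the specific route you propose for (c) does not work. The surjection $G\to\Aut(F_n)$ (induced by killing $\Gamma$) exists, but its kernel is \emph{not} a finite direct product of copies of $\Gamma$. For $n\ge 2$ the automorphism
\[
x_1\longmapsto (x_2\gamma x_2^{-1})\,x_1,\qquad x_j\longmapsto x_j\ (j\ne 1),\qquad \Gamma\ \text{fixed pointwise}
\]
lies in this kernel, and more generally $x_1\mapsto c\,x_1$ is an automorphism in the kernel for every $c$ in the normal closure of $\Gamma$ inside $\Gamma*\langle x_2,\dots,x_n\rangle$, which is an infinite free product of conjugates of $\Gamma$. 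So the kernel is far larger than $\Gamma^k$, and the ``normal form'' description you sketch does not pin $G$ down as an extension of $\HF$ groups in the way you need.

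The paper's argument for (c) uses two external ingredients you do not mention. First, by Burger--Mozes--Zimmer (Proposition~\ref{prop:out:finite}), $\Out(\Gamma)$ is finite; combined with Grushko's theorem this identifies $G$ with a finite-index subgroup of $\Out(\Gamma*F_n)$. Second, the Guirardel--Levitt theorem on outer spaces of free products shows that $\Out(\Gamma*F_n)$ is virtually of type $\F$ (using that $\Gamma$ is freely indecomposable, of type $\F$, and has finite $\Out$). This yields $G\in\HF$ without any explicit extension description. (Incidentally, Theorem~\ref{intro thm constructions} does not require oligomorphicity, only that $S$ be infinite, so your condition (b) is needed for finite presentability but not for the $\FP_\infty$ failure.)
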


This gives a negative answer to the ``natural conjecture'' raised in \cite[Remark~3.7]{BFFHZ}, and raises the question of which finiteness properties $SV_G$ has. For example, is it of type $\FP_3$? This result is surprising since $\Gamma$ and $G$ here are of type $\FP_\infty$, and even (virtually) of type $\F$ (Proposition~\ref{aut type F}).

\subsection*{Acknowledgments} FFF is supported by the Herchel Smith Postdoctoral Fellowship Fund. The authors thank Laurent Bartholdi, Corentin Bodart, Marc Burger, Elia Fioravanti, Marco Linton and Claudio Llosa Isenrich for useful discussions, and the anonymous referee for useful comments.

\section{Oligomorphic actions}\label{sec:olig}

In this section we prove our main results, Theorems~\ref{intro thm action} and~\ref{intro thm constructions}. We also prove Corollary~\ref{intro cor howson}. The bulk of the work is the statement about permutational wreath products, so we begin with that.

\begin{proof}[Proof of Theorem~\ref{intro thm constructions}]
    Let $G$ be a group in $\HF$ acting on a set $S$, and let $B$ be an infinite group. Suppose that $B \wr_S G$ is of type $\FP_\infty$: we will show that $S$ must be finite. First, since $B \wr_S G$ is assumed to be $\FP_\infty$, it is countable and therefore $S$ is countable. Moreover, by Corollary~\ref{wreath FPinf}, since $B \wr_S G$ is $\FP_\infty$, so is $\Z \wr_S G$, so from now on we may assume that $B = \Z$.

    The permutational wreath product fits into a short exact sequence
    \[1 \to \bigoplus_S \Z \to \Z \wr_S G \to G \to 1.\]
    Because $\Z^k$ is in $\HF$ and $\HF$ is closed under countable directed unions \cite[2.2(ii)]{kropholler:HF}, the direct sum $\bigoplus_S \Z$ is in $\HF$. Because $\HF$ is closed under extensions \cite[2.3]{kropholler:HF}, we conclude that $\Z \wr_S G$ is in $\HF$. By \cite[Theorem A]{kropholler:mislin}, we deduce that $\Z \wr_S G$ admits a finite-dimensional classifying space for proper actions. Since the subgroup $\bigoplus_S \Z$ is torsion-free, this implies that it acts freely on a finite-dimensional contractible complex, and therefore has finite cohomological dimension. This is only possible if $S$ is finite.

    Finally, by \cite[Section 2]{zaremsky_fp_tbt}, if $SV_G$ is of type $\FP_\infty$ then so is $\Z \wr_S G$, so the result about $SV_G$ is immediate.
\end{proof}

\begin{remark}
\label{rem wreath like}
    Note that in the proof above, we never used that the short exact sequence defining $\Z \wr_S G$ splits. Therefore the same argument shows that for $G\in\HF$ and $S$ infinite, no \emph{wreath-like product} $\Gamma \in \mathcal{WR}(\Z, G \curvearrowright S)$ can be of type $\FP_\infty$ (see \cite{wreathlike} for the definition). For a general infinite base $B$, we cannot reach the same conclusion, because we reduced to $\Z$ via the results in the appendix, which are specific to permutational wreath products. However it is easy to see that the same argument allows us to cover the case in which $B$ contains a copy of $\Z$.
\end{remark}

Now we quickly obtain our main result.

\begin{proof}[Proof of Theorem~\ref{intro thm action}]
    By Theorem~\ref{intro thm constructions}, if $G$ is a group in $\HF$ acting on an infinite set $S$, then $\Z \wr_S G$ is not of type $\FP_\infty$. By Corollary~\ref{wreath FPinf}, either the action is not oligomorphic, or $G$ is not of type $\FP_\infty$, or there exists a non-empty finite subset of $S$ whose stabiliser in $G$ is not of type $\FP_\infty$. The first case is excluded by assumption. Lemma~\ref{lem:converse} below shows that the second case implies the third case.
\end{proof}

We can also strengthen Theorem~\ref{intro thm action} by obtaining more control on the non-empty subsets whose stabilisers are not of type $\FP_\infty$.

\begin{corollary}
\label{cor large sets}
    Let $G$ be a group in $\HF$ acting on an infinite set $S$. If the action is oligomorphic, then for every $k \geq 1$, there exists a subset of $S$ of size at least $k$ whose stabiliser is not of type $\FP_\infty$.
\end{corollary}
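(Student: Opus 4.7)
The plan is to reduce directly to Theorem~\ref{intro thm action} by replacing $G$ with the pointwise stabiliser of a $(k-1)$-element subset. Fix $k \geq 1$ and choose any $T_0 \subseteq S$ with $|T_0| = k-1$ (take $T_0 = \emptyset$ if $k=1$). Let $H$ denote the pointwise stabiliser of $T_0$ in $G$. Since $\HF$ is subgroup-closed, $H \in \HF$, and $S \setminus T_0$ is an infinite set on which $H$ still acts. To apply Theorem~\ref{intro thm action} to the action of $H$ on $S \setminus T_0$, it remains to verify that this action is oligomorphic.

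This is the only real content of the argument, and it is the standard fact that pointwise stabilisers of finite sets in oligomorphic actions act oligomorphically on the complement. Concretely, writing $T_0 = \{t_1, \dots, t_{k-1}\}$, two $n$-tuples in $(S \setminus T_0)^n$ lie in the same $H$-orbit precisely when the concatenated $(k-1+n)$-tuples $(t_1, \dots, t_{k-1}, x_1, \dots, x_n)$ lie in the same $G$-orbit. Hence the number of $H$-orbits on $n$-tuples of $S \setminus T_0$ (and therefore on $n$-subsets) is bounded by the number of $G$-orbits on $(k-1+n)$-tuples of $S$, which is finite by oligomorphicity of the original action.

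Now Theorem~\ref{intro thm action} applied to $H \in \HF$ acting oligomorphically on the infinite set $S \setminus T_0$ produces a non-empty finite subset $T_1 \subseteq S \setminus T_0$ whose stabiliser in $H$ is not of type $\FP_\infty$. Since $T_0$ and $T_1$ are disjoint, the pointwise stabiliser in $H$ of $T_1$ coincides with the pointwise stabiliser in $G$ of $T_0 \cup T_1$, and $|T_0 \cup T_1| = (k-1) + |T_1| \geq k$. As the setwise stabiliser of a finite set contains the pointwise stabiliser as a subgroup of finite index (with quotient inside $\Sym(T_0 \cup T_1)$), the two share the same finiteness properties, so $T_0 \cup T_1$ is the desired subset of size at least $k$. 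There is no real obstacle here beyond the oligomorphicity transfer above, which is routine.
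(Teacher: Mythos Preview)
Your proof is correct but follows a different route from the paper. You pass to the subgroup $H$ (the pointwise stabiliser of a fixed $(k-1)$-set $T_0$), use that $\HF$ is subgroup-closed, verify that $H$ still acts oligomorphically on $S\setminus T_0$, and then apply Theorem~\ref{intro thm action} to $H$. The paper instead keeps the group $G$ but changes the set: it applies Theorem~\ref{intro thm action} to the induced action of $G$ on $\mathcal{P}_k(S)$, the set of $k$-element subsets of $S$, which is again oligomorphic; the stabiliser in $G$ of a non-empty finite family $\{A_1,\dots,A_m\}\subseteq\mathcal{P}_k(S)$ is then a finite-index subgroup of $\Stab_G(A_1\cup\dots\cup A_m)$, a subset of $S$ of size at least $k$. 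The paper's argument avoids invoking subgroup-closure of $\HF$ and the pointwise/setwise bookkeeping at the end, while your argument makes the ``stabilisers of large sets'' appear more explicitly via the concatenation of $T_0$ and $T_1$. Both are short and essentially equivalent in difficulty.
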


\begin{proof}
    Let $\mathcal{P}_k(S)$ denote the set of subsets of $S$ of size $k$. Since the action of $G$ on $S$ is oligomorphic, so is the action of $G$ on the infinite set $\mathcal{P}_k(S)$. Theorem~\ref{intro thm action} gives a non-empty finite subset of $\mathcal{P}_k(S)$ whose stabiliser in $G$ is not of type $\FP_\infty$. This is equal to the stabiliser of a subset of $S$ of size at least $k$.
\end{proof}

\subsection{A question}
\label{subsec question}

One reason that Theorem~\ref{intro thm action} is surprising is that it is in some sense a negation of the converse of the following easy fact.

\begin{lemma}
\label{lem:converse}
    Let $G$ be a group acting oligomorphically on a set $S$. If the stabiliser in $G$ of every non-empty finite subset of $S$ is of type $\FP_\infty$, then $G$ is of type $\FP_\infty$.
\end{lemma}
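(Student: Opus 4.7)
The plan is to realise $G$ as the fundamental group of a nice action on a contractible complex, so that the classical Brown/Bieri criterion applies. Concretely, I would consider the \emph{full simplex} $\Delta = \Delta(S)$: the abstract simplicial complex whose vertex set is $S$ and whose simplices are all non-empty finite subsets of $S$, equipped with the simplicial $G$-action induced by the given action on $S$.

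The first ingredient is that $\Delta$ is contractible. Any continuous map from a sphere (or, more generally, from a compact CW-complex) into $\Delta$ has image meeting only finitely many vertices $v_1,\ldots,v_k \in S$, hence lies in the finite subsimplex spanned by $\{v_1,\ldots,v_k\}$, which is a standard simplex and thus contractible; so $\pi_n(\Delta)=0$ for all $n$, and $\Delta$ is contractible by Whitehead. The second ingredient is that, by oligomorphicity, $G$ acts on $\Delta$ with only finitely many orbits of $n$-simplices in each dimension $n$, because orbits of $n$-simplices are exactly orbits of $(n+1)$-element subsets of $S$.

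The third ingredient is that all cell stabilisers are of type $\FP_\infty$. The setwise stabiliser of a simplex $\sigma = \{s_0,\ldots,s_n\}$ is the stabiliser of the finite subset $\sigma \subseteq S$, which is of type $\FP_\infty$ by hypothesis. The pointwise stabiliser is a subgroup of finite index (the setwise stabiliser surjects onto a subgroup of $\Sym(\sigma)$), and finite-index subgroups inherit type $\FP_\infty$. So after a single barycentric subdivision (which makes the action admissible, in the sense that the setwise and pointwise stabilisers of every simplex agree), we obtain a cocompact-per-dimension action of $G$ on a contractible CW-complex with all cell stabilisers of type $\FP_\infty$. Applying Brown's criterion for finiteness properties (the $\FP_\infty$ version of the Bieri--Eckmann/Brown theorem), we conclude that $G$ itself is of type $\FP_\infty$.

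There is no real obstacle here: the whole argument is a direct application of a well-known criterion once the correct contractible complex is identified. The only minor care is the passage from setwise to pointwise stabilisers, handled either by subdivision or by noting finite-index closure of $\FP_\infty$.
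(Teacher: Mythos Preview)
Your argument is correct and follows essentially the same route as the paper's proof: both build the full simplex on $S$, use oligomorphicity to get finitely many orbits of simplices in each dimension, note contractibility, and then invoke Brown's criterion with the hypothesis on stabilisers. The paper phrases the last step via the $(n-1)$-connected $n$-skeleta rather than via barycentric subdivision, but this is a cosmetic difference.
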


\begin{proof}
    Let $X$ be the simplicial complex with vertex set $S$ such that every finite subset of $S$ spans a simplex. The oligomorphic action of $G$ on $S$ extends to an action of $G$ on $X$ with finitely many orbits of simplices per dimension. The complex $X$ is contractible, so its $n$-skeleton $X^{(n)}$ is $(n-1)$-connected, for each $n$. The action of $G$ on $X^{(n)}$ is cocompact, and all simplex stabilisers are of type $\FP_\infty$, so $G$ is of type $\FP_n$ for example by \cite[Proposition~1.1]{brown87}. This works for all $n$, so $G$ is of type $\FP_\infty$.
\end{proof}

The most basic example of a group for which all the hypotheses of Lemma~\ref{lem:converse} apply is Thompson's group $F$: it is of type $\FP_\infty$ \cite{browngeo84}, it acts oligomorphically on $\Z\left[\frac{1}{2}\right] \cap (0, 1)$ \cite[Lemma 4.2]{cannon96}, and the stabiliser of any $n$-tuple is isomorphic to $F^{n+1}$ \cite[Lemma 4.4]{cannon96}, hence is of type $\FP_\infty$. In fact, Thompson's group $F$ is the prototypical example of a group not in $\HF$ \cite[End of Section 1]{kropholler:HF}. Similarly, many ``Thompson-like'' groups admit an action as in Lemma~\ref{lem:converse}. It would be interesting to find examples of a different nature:

\begin{question}
\label{question}
    Does there exist a group $G$ satisfying the hypotheses of Lemma~\ref{lem:converse} (with $S$ infinite) that does not contain a copy of Thompson's group $F$?
\end{question}

By Theorem~\ref{intro thm action}, such a group cannot be in $\HF$. We only know of two sources of groups not in $\HF$ that do not contain a copy of Thompson's group $F$. One source is groups with strong fixed point properties coming from small cancellation constructions \cite{fixedpoint1, fixedpoint2, fixedpoint3}. Such groups are, essentially by construction, not even of type $\FP_2$ \cite[Remark 4.5]{fixedpoint2}.

Another source is groups with \emph{jump rational cohomology}, that is, groups $G$ with $\cdq(G) = \infty$, and with the property that there exists $n \in \N$ such that every $H \leq G$ with $\cdq(H) < \infty$ satisfies $\cdq(H) \leq n$. Such groups cannot be in $\HF$ \cite[Theorem 3.2]{jumps}. Besides some overlap with the previous source \cite{fixedpoint2}, examples include some branch groups such as Grigorchuk's group \cite[Theorem 4.11]{gandini}; see \cite[Proposition 3.22]{subgroupinduction} for more examples. However, branch groups cannot act oligomorphically, or even almost-$2$-transitively \cite{francoeur}.

Let us remark that a group answering Question~\ref{question} would necessarily have infinite cohomological dimension (otherwise it would be in $\HF$). Therefore, a \emph{torsion-free, finitely presented} example would answer a question of Stefan Witzel \cite{witzel:question}, which asks about the existence of a torsion-free group of type $\F_\infty$ with infinite cohomological dimension that does not contain a copy of Thompson's group $F$.

\subsection{Stronger hypotheses, stronger conclusions}

Now let us prove the stronger result under the additional assumptions of the Howson property and homological supercoherence.

\begin{proof}[Proof of Corollary~\ref{intro cor howson}]
    Let $G$ be a finitely generated homologically supercoherent group $G$ in $\HF$ with the Howson property, acting oligomorphically on an infinite set $S$. Suppose that every element of $S$ has a finitely generated stabiliser in $G$. By the Howson property, every finite subset of $S$ has a finitely generated stabiliser in $G$. By homological supercoherence, $G$ and all of these stabilisers are of type $\FP_\infty$, contradicting Theorem~\ref{intro thm action}.
\end{proof}

In fact the same argument as in Corollary~\ref{cor large sets} strengthens this to:

\begin{corollary}
\label{cor large sets howson}
    Let $G$ be a group in $\HF$ acting on an infinite set $S$. Suppose $G$ is finitely generated, homologically supercoherent, and has the Howson property. If the action is oligomorphic, then for every $k \geq 1$, there exists a subset of $S$ of size $k$ whose stabiliser is not finitely generated. \qed
\end{corollary}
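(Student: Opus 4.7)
The plan is to carry out the same argument as in Corollary~\ref{cor large sets}, replacing the appeal to Theorem~\ref{intro thm action} with the stronger Corollary~\ref{intro cor howson}. Fix $k \geq 1$ and pass to the induced action of $G$ on $\mathcal{P}_k(S)$, the set of $k$-element subsets of $S$.

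The first step is to observe that this induced action is again oligomorphic and that $\mathcal{P}_k(S)$ is infinite. For the latter, $S$ infinite clearly gives $\mathcal{P}_k(S)$ infinite. For oligomorphicity, an $n$-element subset of $\mathcal{P}_k(S)$ is an unordered collection of $n$ distinct $k$-subsets of $S$; after choosing arbitrary orderings on the collection and on each $k$-subset it can be read off from an $nk$-tuple of elements of $S$. Since $G \curvearrowright S$ is oligomorphic, it has finitely many orbits on $nk$-tuples in $S$, and hence finitely many orbits on the $n$-subsets of $\mathcal{P}_k(S)$ that they encode.

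Next, I would observe that the remaining hypotheses on $G$ in Corollary~\ref{intro cor howson} --- membership in $\HF$, finite generation, homological supercoherence, and the Howson property --- are intrinsic properties of the abstract group, independent of which $G$-set is under consideration, so they are inherited by the action on $\mathcal{P}_k(S)$. Applying Corollary~\ref{intro cor howson} to the $G$-action on $\mathcal{P}_k(S)$ then produces a point of $\mathcal{P}_k(S)$ whose stabiliser in $G$ is not finitely generated. Since a point of $\mathcal{P}_k(S)$ is precisely a subset of $S$ of size exactly $k$, the conclusion follows.

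There is no real obstacle here: the only mildly substantive step is the verification that oligomorphicity of $G \curvearrowright S$ implies oligomorphicity of $G \curvearrowright \mathcal{P}_k(S)$, which is a well-known and immediate consequence of the equivalent formulation of oligomorphicity in terms of orbits on tuples.
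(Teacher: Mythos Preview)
Your proposal is correct and is exactly the argument the paper intends: the text preceding the corollary says ``the same argument as in Corollary~\ref{cor large sets}'' and then states the result with a \qed, and you have spelled out precisely that argument, replacing Theorem~\ref{intro thm action} by Corollary~\ref{intro cor howson}. You also correctly note that because Corollary~\ref{intro cor howson} yields a single point of $\mathcal{P}_k(S)$ rather than a finite subset, the resulting subset of $S$ has size exactly $k$ (as stated) rather than merely at least $k$.
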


We also get a sort of analog of Theorem~\ref{intro thm constructions}, where the stronger assumptions on $G$ lead to the stronger conclusion that the permutational wreath product and twisted Brin--Thompson group cannot even be of type $\FP_2$. (Here we need to assume that the action is oligomorphic, unlike in Theorem~\ref{intro thm constructions}, but need to make no assumptions about $B$.)

\begin{corollary}\label{cor:howson_FP2}
    Let $G$ be a finitely generated homologically supercoherent group in $\HF$ with the Howson property, acting oligomorphically on a set $S$. If $B$ is a non-trivial group, then the permutational wreath product $B \wr_S G$ is not of type $\FP_2$. Moreover, the twisted Brin--Thompson group $SV_G$ (which exists only when the action is faithful) is not of type $\FP_2$.
\end{corollary}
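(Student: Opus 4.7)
The plan is to run the strategy of the proof of Theorem~\ref{intro thm constructions}, but with ``$\FP_\infty$'' replaced by ``$\FP_2$'' throughout, using Corollary~\ref{cor large sets howson} in place of Theorem~\ref{intro thm action} to exploit the additional hypotheses on $G$.

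First I would apply Corollary~\ref{cor large sets howson}: under the hypotheses on $G$ and on the action, there exists a non-empty finite subset $T \subseteq S$ whose stabiliser $G_T$ in $G$ is not even finitely generated.

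Next I would invoke an $\FP_2$ analogue of Corollary~\ref{wreath FPinf}, proved in the appendix by the same method. The $\FP_\infty$ statement reads: if $B$ is non-trivial and $B \wr_S G$ is $\FP_\infty$, then either the action fails to be oligomorphic, or $G$ is not $\FP_\infty$, or some non-empty finite subset of $S$ has stabiliser not of type $\FP_\infty$. Its $\FP_2$ counterpart replaces each ``$\FP_\infty$'' on the right-hand side by ``finitely generated'' (and oligomorphicity by the corresponding finiteness of orbits on subsets of bounded size), because the proof of the appendix criterion proceeds one finiteness level at a time. Under our hypotheses the first two alternatives are ruled out: the action is oligomorphic by assumption, and $G$ is $\FP_2$ because $G$ is a finitely generated subgroup of itself and $G$ is homologically supercoherent. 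So the criterion forces every stabiliser of a non-empty finite subset of $S$ to be finitely generated, contradicting the first step.

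Finally, for the twisted Brin--Thompson group, exactly as in the proof of Theorem~\ref{intro thm constructions}, \cite[Section~2]{zaremsky_fp_tbt} shows that $SV_G$ being $\FP_2$ implies that $\Z \wr_S G$ is $\FP_2$; applying the wreath product statement with $B = \Z$ concludes the proof. The main obstacle is verifying that the appendix indeed produces the $\FP_2$ version of Corollary~\ref{wreath FPinf} in the required generality: in particular with no hypothesis of infinite abelianisation on $B$, which is precisely the strengthening of \cite{bartholdi15, kropholler:martino} announced in the introduction and needed in order for the case $B = \Z$ deduction above to cover an arbitrary non-trivial base.
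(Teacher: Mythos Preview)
Your approach is close to the paper's but has two issues, one minor and one genuine.

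The minor one: you misdescribe what the $\FP_2$ instance of the appendix criterion (Theorem~\ref{wreath FPn} with $n=2$) actually yields. It does \emph{not} force stabilisers of all non-empty finite subsets to be finitely generated; it only forces stabilisers of points of $S$ to be $\FP_1$ (the condition on $S^2$ is $\FP_0$, which is vacuous). That is still enough for a contradiction, but you must then invoke Corollary~\ref{intro cor howson}, which produces a single \emph{point} with non-finitely-generated stabiliser, rather than Corollary~\ref{cor large sets howson} with an unspecified finite set $T$.

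The genuine gap: Theorem~\ref{wreath FPn} in the appendix requires $B$ to be \emph{infinite}, whereas the corollary you are proving allows any non-trivial $B$; the appendix explicitly leaves the finite-base case open. So your argument does not cover finite non-trivial $B$, and your last paragraph conflates ``no infinite-abelianisation hypothesis'' with ``no hypothesis at all''. The paper sidesteps this entirely: it cites \cite[Lemma~5.1]{bartholdi15}, which for \emph{any} non-trivial base $B$ shows that if $B\wr_S G$ is of type $\FP_2$ then every point stabiliser in $S$ is finitely generated. That single lemma plus Corollary~\ref{intro cor howson} finishes the wreath-product statement in one line, with no need for the appendix. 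Your treatment of the $SV_G$ part (reduce to $B=\Z$ via \cite[Section~2]{zaremsky_fp_tbt}) matches the paper's and is fine, since there the base is infinite.
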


\begin{proof}
    As in the proof of Theorem~\ref{intro thm constructions}, if $SV_G$ is of type $\FP_2$ then so is $\Z \wr_S G$, so we focus on the statement on wreath products. Let $B$ be a non-trivial group and suppose that $B\wr_S G$ is of type $\FP_2$. By \cite[Lemma~5.1]{bartholdi15}, every element of $S$ consequently has a finitely generated stabiliser in $G$. This contradicts Corollary~\ref{intro cor howson}, and we are done.
\end{proof}

There is another sufficient condition for getting the conclusions of Corollary~\ref{intro cor howson}, namely the property of $G$ being LERF. This essentially already existed in the literature, due to Cornulier \cite{cornulier06}. Recall that a group is \emph{LERF (locally extended residually finite)} if every finitely generated subgroup is closed in the profinite topology, i.e., it is the intersection of a collection of finite index subgroups. For this result we do not need the action to be oligomorphic, but only \emph{almost 2-transitive}, meaning there are finitely many orbits of 2-element subsets.

\begin{lemma}\label{lem:lerf}
    Let $G$ be a group acting almost 2-transitively on an infinite set $S$. If $G$ is LERF, then there exists a point in $S$ whose stabiliser is not finitely generated.
\end{lemma}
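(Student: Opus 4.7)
My plan is to argue by contradiction: suppose every point of $S$ has a finitely generated stabiliser in $G$. Because having finitely many $G$-orbits on $2$-element subsets of $S$ forces only finitely many $G$-orbits on $S$ itself, and $S$ is infinite, at least one $G$-orbit in $S$ must be infinite. I would replace $S$ by such an infinite orbit so that $G$ acts transitively on $S$, identify $S \cong G/G_x$ for a chosen $x \in S$, and note $[G : G_x] = \infty$.

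Next I would identify the $G_x$-orbits on $S$ with the double cosets in $G_x \backslash G / G_x$. Almost $2$-transitivity forces this double coset set to be finite, so I fix representatives $g_0 = 1, g_1, \ldots, g_n$, with $g_k \notin G_x$ for $k \geq 1$. Since $G$ is LERF and $G_x$ is finitely generated, $G_x$ is separable in $G$, so there exists a finite-index subgroup $K \leq G$ with $G_x \leq K$ and $g_k \notin K$ for every $k \geq 1$.

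The core step of the proof is then to observe that such a $K$ must actually equal $G_x$, contradicting $[G : K] < \infty$ together with $[G : G_x] = \infty$. Indeed, given any $g \in K$, the element $g$ lies in some double coset $G_x g_k G_x$; if $k \geq 1$, I can write $g = h_1 g_k h_2$ with $h_1, h_2 \in G_x \subseteq K$, and then $g_k = h_1^{-1} g h_2^{-1}$ would also lie in $K$, contradicting the defining property of $K$. So every element of $K$ lies in $G_x$, forcing $K = G_x$.

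The argument is short and I do not anticipate any serious obstacles. The use of LERF is sharp but elementary: separability of the finitely generated subgroup $G_x$ is deployed not merely to exclude a single element, but to simultaneously exclude an entire set of non-trivial double coset representatives from some finite-index overgroup of $G_x$, at which point the double coset structure immediately forces that overgroup to collapse onto $G_x$ itself.
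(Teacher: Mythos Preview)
Your proof is correct and follows essentially the same route as the paper: reduce to a transitive action, translate almost $2$-transitivity into finiteness of $G_x\backslash G/G_x$, and combine this with separability of $G_x$ to force $G_x$ to have finite index, a contradiction. The only difference is packaging: the paper cites \cite[Lemma~3.2]{cornulier06} for the implication ``finite biindex $\Rightarrow$ the profinite closure of $G_x$ has finite index'' and then invokes LERF to identify $G_x$ with its profinite closure, whereas you unwind this by explicitly building a finite-index $K\ge G_x$ missing the nontrivial double-coset representatives and showing directly that $K=G_x$; your version is thus self-contained but otherwise the same argument.
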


\begin{proof}
    Up to restricting to a single orbit, we may assume the action is transitive, and so the goal is to show that no point stabilisers are finitely generated. Let $s\in S$ and let $H=\Stab_G(s)$. Since the action is almost 2-transitive, there are finitely many double cosets $HgH$ for $g\in G$, i.e., $H$ has finite biindex in $G$. By \cite[Lemma~3.2]{cornulier06}, this implies that the profinite closure of $H$ in $G$ (i.e., the intersection of all finite index subgroups containing $H$) has finite index. Now if $H$ were finitely generated, the LERF property would imply that $H$ has finite index in $G$, which is impossible since $S$ is infinite and $G$ acts transitively on $S$.
\end{proof}

\begin{example}
\label{ex limit}
    Recall that a \emph{limit group} is a finitely generated fully residually free group, that is, one for which every finite subset maps injectively into a free quotient. It follows from the constructive definition of limit groups \cite[Theorem 4.6]{limit} (and the fact that abelian subgroups of limit groups are finitely generated \cite[Theorem 3.9]{limit}) that limit groups are of type $\F$, i.e., they admit finite classifying spaces. In particular, they are of type $\FP_\infty$ and belong to $\HF$. Moreover, finitely generated subgroups of limit groups are limit groups, from which it follows that limit groups are homologically supercoherent. Finally, they have the Howson property \cite{limit:howson}. However, they are LERF \cite{limit:lerf}.
\end{example}

\begin{example}
\label{ex locally quasiconvex}
    A hyperbolic group is \emph{locally quasiconvex} if every finitely generated subgroup is quasiconvex. Recall that hyperbolic groups admit a finite model for the classifying space for proper actions \cite{meintrup02}, therefore they are of type $\FP_\infty$ and in $\HF$. Quasiconvex subgroups of hyperbolic groups are themselves hyperbolic, and this implies that locally quasiconvex hyperbolic groups are homologically supercoherent. Moreover, the intersection of two quasiconvex subgroups is quasiconvex, and so locally quasiconvex hyperbolic groups have the Howson property. However, locally quasiconvex hyperbolic groups are LERF, conditionally on the residual finiteness of all hyperbolic groups, which is a long standing question \cite[5.3.B Remark]{gromov}. More generally, if all hyperbolic groups are residually finite, then all quasiconvex subgroups of hyperbolic groups are closed in the profinite topology \cite{qcerf}.
\end{example}

Note that both these examples are either virtually abelian or acylindrically hyperbolic. It is known that acylindrically hyperbolic groups admit highly transitive (hence oligomorphic) actions with finite kernel \cite{hull16}.

We end with one non-LERF example satisfying the hypotheses of Corollary~\ref{intro cor howson}, where however the conclusion can be obtained more easily.

\begin{example}
\label{ex:bs}
    Let $n > 1$ and let $BS(1, n) = \langle a, t \mid tat^{-1} = a^n \rangle$ denote the solvable Baumslag--Solitar group. It lies in $\HF$, for instance because it has cohomological dimension $2$. It is not LERF, indeed the subgroup $\langle a \rangle$ is not separable \cite{bs:lerf}. Every finitely generated subgroup is either cyclic or has finite index \cite[Theorem~2]{bs:subgroups}. This implies at once that $BS(1, n)$ is homologically supercoherent and has the Howson property.
    
    However, the conclusion of Corollary~\ref{intro cor howson} is easier to obtain. Indeed, suppose that $BS(1, n)$ acts on an infinite set $S$ oligomorphically. Then some point stabiliser must be of infinite index, and act oligomorphically on the complement of the point. Because $\Z$ cannot act oligomorphically on an infinite set, the stabiliser must be infinitely generated, again by \cite[Theorem~2]{bs:subgroups}.
\end{example}

We refer the reader to \cite{howson:gog2,howson:gog1} for recent results on the Howson property for graphs of groups, which are hence potential applications of Corollary~\ref{intro cor howson}.

\section{Flag complexes and graph-wreath products}

Recall that a \emph{flag complex} is a simplicial complex in which every finite complete subgraph of the 1-skeleton lies in a simplex. A flag complex is completely determined by its 1-skeleton, so when talking about flag complexes we are often really talking about simplicial graphs. Given a simplicial graph $\Gamma$ and a group $B$, the \emph{graph product} $B^\Gamma$ is the group generated by a copy of $B$ for each vertex of $\Gamma$, modulo the relation that whenever two vertices are adjacent, their corresponding copies of $B$ commute. For example if $\Gamma$ has no edges then $B^\Gamma$ is the free product of a copy of $B$ for each vertex, and if every pair of vertices of $\Gamma$ spans an edge then $B^\Gamma$ is the direct sum $\bigoplus_{\Gamma^{(0)}}B$. For $X$ a flag complex, write $B^X$ for $B^{X^{(1)}}$.

If a group $G$ acts simplicially on a flag complex $X$, then $G$ acts on the graph product $B^X$ by permuting the copies of $B$. This leads us to the following generalisation of permutational wreath products \cite{kropholler:martino}.

\begin{definition}[Graph-wreath product]
    Let $G$ be a group acting simplicially on a flag complex $X$. Let $B$ be a group. The \emph{graph-wreath product} $B \grwr_X G$ is the semidirect product
    \[
    B^X \rtimes G,
    \]
    where $G$ acts on $B^X$ as above.
\end{definition}

For example if $X^{(1)}$ is complete then $B \grwr_X G = B \wr_{X^{(0)}} G$.

\begin{proof}[Proof of Theorem~\ref{intro thm graph-wreath}]
    Let $G$ be a group in $\HF$ acting on a flag complex $X$ and let $B$ be an infinite group. Suppose that $B \grwr_X G$ is of type $\F_\infty$: we will show that $X$ is finite-dimensional. First, $B \grwr_X G$ is countable and therefore $X^{(0)}$ is countable. Moreover, Theorem~\ref{graph-wreath Fn} implies that $\Z \grwr_X G$ is also of type $\F_\infty$, so from now on we may assume that $B = \Z$.

    The graph-wreath product fits into a short exact sequence
    \[1 \to \Z^X \to \Z \grwr_X G \to G \to 1.\]
    Now $\Z^X$ is a directed union of right angled Artin groups, which are linear \cite{raags:linear}, hence in $\HF$, so by closure under countable directed unions \cite[2.2(ii)]{kropholler:HF}, we see that $\Z^X$ is in $\HF$. Because $\HF$ is closed under extensions \cite[2.3]{kropholler:HF}, we conclude that $\Z \grwr_X G$ is in $\HF$. By \cite[Theorem A]{kropholler:mislin}, we deduce that $\Z \grwr_X G$ admits a finite-dimensional classifying space for proper actions. Since the subgroup $\Z^X$ is torsion-free, this implies that it acts freely on a finite-dimensional contractible complex, and therefore has finite cohomological dimension. But if $X$ contains a $k$-simplex, then $\Z^X$ contains a copy of $\Z^{k+1}$, so this forces $X$ to be finite-dimensional.
\end{proof}

\begin{proof}[Proof of Theorem~\ref{intro thm flag}]
    By Theorem~\ref{intro thm graph-wreath}, if $G$ is a group in $\HF$ acting on an infinite-dimensional flag complex $X$, then $\Z \grwr_X G$ is not of type $\F_\infty$. By Theorem~\ref{graph-wreath Fn}, either the action is not cocompact on some skeleton, or $G$ is not of type $\F_\infty$, or the stabiliser of some simplex is not of type $\F_\infty$, as desired.
\end{proof}

Now the same argument as in the proof of Corollary~\ref{intro cor howson} gives:

\begin{corollary}
\label{flag howson}
    Let $G$ be a group in $\HF$ acting on an infinite-dimensional flag complex $X$. Suppose that $G$ is finitely generated, supercoherent, and has the Howson property. If the action is cocompact on each skeleton, then there exists a vertex in $X$ whose stabiliser is not finitely generated. \qed
\end{corollary}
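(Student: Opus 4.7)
The plan is to mirror the proof of Corollary~\ref{intro cor howson}, invoking Theorem~\ref{intro thm flag} in place of Theorem~\ref{intro thm action}. Suppose toward a contradiction that every vertex of $X$ has a finitely generated stabiliser in $G$. A simplex $\sigma$ of $X$ is a finite set of vertices, so its pointwise stabiliser is the intersection of finitely many finitely generated subgroups of $G$; iterating the Howson property shows this pointwise stabiliser is finitely generated. The setwise stabiliser of $\sigma$ contains the pointwise stabiliser as a subgroup of index at most $|\sigma|!$, and is therefore also finitely generated.

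Next, supercoherence (understood here in the homotopical sense, i.e., every finitely generated subgroup is of type $\F_\infty$, as is appropriate given that Theorem~\ref{intro thm flag} involves $\F_\infty$ rather than $\FP_\infty$) applied to $G$ itself---which is finitely generated by hypothesis---and to each simplex stabiliser yields that $G$ and every simplex stabiliser of the action on $X$ is of type $\F_\infty$. Together with the assumption that $G$ acts cocompactly on each skeleton of $X$, this directly contradicts the dichotomy in Theorem~\ref{intro thm flag}. There is no real obstacle to overcome beyond the Howson-iteration and finite-index bookkeeping for simplex stabilisers, both of which are routine because simplices in $X$ have only finitely many vertices; the substantive work has already been carried out in Theorem~\ref{intro thm flag}.
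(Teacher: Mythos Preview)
Your proof is correct and follows exactly the approach the paper intends: the paper states the corollary with a \qed and the remark ``the same argument as Corollary~\ref{intro cor howson} gives,'' which is precisely what you have done, replacing Theorem~\ref{intro thm action} by Theorem~\ref{intro thm flag}. Your explicit bookkeeping distinguishing pointwise from setwise simplex stabilisers (and the finite-index passage between them) is a small elaboration over the paper's one-line proof of Corollary~\ref{intro cor howson}, but the argument is otherwise identical.
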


Here a group is said to be \emph{supercoherent} if every finitely generated subgroup is of type $\F_\infty$.

\begin{remark}
    We are not aware of a natural analog of twisted Brin--Thompson groups using actions on flag complexes, beyond the case where every pair of vertices is adjacent. The first question in this direction is whether the pipeline from $V\times V$ to $2V$ \cite{brin04} has some variation that instead begins with $V*V$.
\end{remark}

\section{Connections to the Boone--Higman conjecture}\label{sec:bhc}

We start with an appropriate definition of Burger--Mozes groups, which serves our purpose.
Let us quickly recall some basic notions. Let $\Omega$ be a finite set, and let $\Sigma$ be a group of permutations of $\Omega$. We say that $\Sigma$ is \emph{primitive} if the only partitions of $\Omega$ that are preserved by $\Sigma$ are the trivial ones (i.e., $\{\Omega\}$ and the partition into singletons).
Now let $T$ be a locally finite tree, and let $H < \Aut(T)$. For a vertex $x$ of $T$, the stabiliser $G_x$ acts as a finite permutation group on the set of edges adjacent to $x$. We say that $G$ is \emph{locally primitive} if this finite permutation group is primitive, for every $x$.

\begin{definition}[Burger--Mozes group]
\label{def:BM}
    Let $T_1, T_2$ be regular trees of finite degree at least $3$. A \emph{Burger--Mozes group} is a cocompact lattice $\Gamma < \Aut(T_1) \times \Aut(T_2)$ such that $\overline{\mathrm{pr}_i(\Gamma)} < \Aut(T_i)$ is locally primitive for $i=1,2$.
\end{definition}

Burger--Mozes groups are finitely presented and infinite. Moreover, there exist \emph{simple} Burger--Mozes groups \cite{burgermozes}.

\begin{proposition}[{\cite[Corollary 1.4.3]{BMZ}}]
\label{prop:out:finite}
Let $\Gamma$ be a Burger--Mozes group. Then $\Out(\Gamma)$ is finite.
\end{proposition}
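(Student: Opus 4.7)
The plan is to show that $\Aut(\Gamma)$ is virtually realised by conjugation inside a discrete subgroup of $\Aut(T_1) \times \Aut(T_2)$ that contains $\Gamma$ with finite index. First I would check that $\Gamma$ has trivial centre: any central element would centralise the dense projection $\pi_i(\Gamma) \leq \overline{\pi_i(\Gamma)}$ inside each factor, and local primitivity of $\overline{\pi_i(\Gamma)}$ forces the centraliser of such a dense subgroup in $\Aut(T_i)$ to be trivial. Consequently $\Inn(\Gamma) \cong \Gamma$, and it suffices to bound $[\Aut(\Gamma) : \Gamma]$.

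The heart of the argument is a rigidity step: I would show that every $\varphi \in \Aut(\Gamma)$, possibly after composing with an involution swapping the two tree factors, is induced by conjugation by an element of the normaliser $N$ of $\Gamma$ in $\Aut(T_1) \times \Aut(T_2)$. For this I would first recover the unordered pair of normal subgroups $\{K_1, K_2\} = \{\ker \pi_1, \ker \pi_2\}$ from the abstract group structure of $\Gamma$, using the Burger--Mozes normal subgroup theorem to characterise them as the maximal normal subgroups of $\Gamma$ whose quotients embed as lattices with locally primitive closure into some $\Aut(T_j)$. Passing to an index at most $2$ subgroup of $\Aut(\Gamma)$, I may then assume $\varphi$ fixes each $K_i$, and hence descends to an abstract automorphism $\bar\varphi_i$ of the dense subgroup $\pi_i(\Gamma) \leq \overline{\pi_i(\Gamma)}$. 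By the rigidity of locally primitive closed subgroups of $\Aut(T_i)$, $\bar\varphi_i$ extends continuously to an automorphism of $\overline{\pi_i(\Gamma)}$ that is realised by conjugation inside a finite extension of $\overline{\pi_i(\Gamma)}$ in $\Aut(T_i)$; assembling the two factors recovers a conjugating element of $N$.

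Finally, since $\Gamma$ is a cocompact lattice in $\Aut(T_1) \times \Aut(T_2)$ and $\Gamma \triangleleft N$, the quotient $N/\Gamma$ is compact, and it is also discrete because $\Gamma$ has trivial centraliser in $\Aut(T_1) \times \Aut(T_2)$ by the first paragraph. Hence $N/\Gamma$ is finite, and combined with the at most order-$2$ contribution from the factor swap this yields $|\Out(\Gamma)| < \infty$. The main obstacle is the rigidity step in the second paragraph: both the abstract recovery of the kernels $K_1, K_2$ as group-theoretically canonical subgroups of $\Gamma$, and the promotion of abstract automorphisms of $\pi_i(\Gamma)$ to continuous automorphisms of $\overline{\pi_i(\Gamma)}$, rely on delicate structural results of Burger and Mozes for lattices with locally primitive projections, and this is where the hypothesis of local primitivity enters essentially.
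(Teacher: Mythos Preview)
The paper does not prove this proposition; it simply quotes \cite[Corollary~1.4.3]{BMZ} and uses the result as a black box. So there is no proof in the paper to compare your sketch against.

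That said, your three-step architecture (trivial centre; every automorphism realised, up to the factor swap, by conjugation in the normaliser $N$ of $\Gamma$ in $\Aut(T_1)\times\Aut(T_2)$; then $[N:\Gamma]<\infty$) is the standard shape of such a Mostow-type argument, and your first and third steps are essentially correct. The gap is in your rigidity step. Recovering the pair $\{K_1,K_2\}=\{\ker\pi_1,\ker\pi_2\}$ as abstractly characteristic does not help in the case that matters: for an \emph{irreducible} lattice---in particular for the simple Burger--Mozes groups actually used later in the paper---both projections are injective, so $K_1=K_2=\{1\}$, your characterisation is vacuous, and you are left asserting that an arbitrary abstract automorphism of the dense image $\pi_i(\Gamma)\cong\Gamma$ extends continuously to $\overline{\pi_i(\Gamma)}$. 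That continuity is precisely the rigidity statement you are trying to establish, and it does not follow from local primitivity of $\overline{\pi_i(\Gamma)}$ alone; nothing in your outline rules out an automorphism that intertwines the two (a priori different) subspace topologies on $\Gamma$ coming from the two factors.

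The route taken in \cite{BMZ} is geometric rather than via the projection kernels: $\Gamma$ acts properly and cocompactly on $T_1\times T_2$, so every $\varphi\in\Aut(\Gamma)$ induces a self-quasi-isometry of $T_1\times T_2$; product/tree rigidity (this is where the Burger--Mozes structure theory for locally primitive groups enters) forces such a quasi-isometry to be at bounded distance from an isometry of $T_1\times T_2$, which then normalises $\Gamma$. If you want to turn your sketch into a proof, that geometric step is what should replace the kernel argument.
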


Now let $\Gamma$ be a simple Burger--Mozes group. We consider the group $G = \Aut_\Gamma(\Gamma*F_n)$ of automorphisms of the free product $\Gamma*F_n$ that restrict to the identity on $\Gamma$, where $F_n$ is a free group of rank $n \geq 2$. Moreover, we let $S = \Hom_\Gamma(\Gamma*F_n, \Gamma)$ be the set of homomorphisms from $\Gamma*F_n$ to $\Gamma$ that restrict to the identity on $\Gamma$. The group $G$ naturally acts on $S$ by precomposition.

\begin{proposition}[\cite{BFFHZ}]
\label{prop action properties}
    Let $\Gamma$ be a simple Burger--Mozes group, and let $G$ and $S$ be as above. Then $G$ is finitely presented, the action of $G$ on $S$ is faithful and oligomorphic, and point stabilisers are finitely generated.
\end{proposition}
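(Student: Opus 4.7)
The plan is essentially to cite \cite{BFFHZ}, where the analogous four conclusions are established for any finitely presented, simple, mixed identity-free group $\Gamma$. The only additional task is to verify that simple Burger--Mozes groups satisfy these hypotheses---crucially, mixed identity-freeness (MIF). I would establish MIF for simple Burger--Mozes groups by exploiting the dense projections $\overline{\mathrm{pr}_i(\Gamma)} \le \Aut(T_i)$ to locally primitive non-discrete groups acting on the trees $T_i$: a Baire-category/genericity argument should produce, for any prescribed nontrivial $w \in \Gamma * F_n$, an $n$-tuple in $\Gamma$ whose associated homomorphism $\Gamma * F_n \to \Gamma$ sends $w$ to a nontrivial element of $\Gamma$. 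Non-discreteness of the closure yields enough ``generic'' tuples, while local primitivity ensures that $\Gamma$ is large enough to avoid any prescribed identity.

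Granting MIF and invoking \cite{BFFHZ}, each of the four conclusions follows. Finite presentability of $G$ uses the Fouxe-Rabinovich/McCool presentation of $\Aut(\Gamma * F_n)$ together with finite presentability of $\Aut(\Gamma)$, which holds because $\Gamma$ is finitely presented and $\Out(\Gamma)$ is finite by Proposition~\ref{prop:out:finite}. Faithfulness of $G \curvearrowright S$ is immediate from MIF once $S$ is identified with $\Gamma^n$ by evaluation on a fixed basis of $F_n$. Oligomorphicity becomes the statement that $\Aut(F_n)$ acting by Nielsen moves together with coordinate-wise $\Gamma$-conjugation has finitely many orbits on $k$-tuples in $\Gamma^n$; this finiteness is controlled by the finiteness of $\Out(\Gamma)$ combined with the rigidity of $\Gamma$. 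Finite generation of $\Stab_G(\rho)$ comes from identifying this stabiliser with a finite-index subgroup of the group of automorphisms of $\Gamma * F_n$ preserving $\ker\rho$, and then invoking a relative Fouxe-Rabinovich presentation.

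The main obstacle is establishing MIF for simple Burger--Mozes groups; once that is in hand, the rest of the proof is a clean application of the \cite{BFFHZ} framework, drawing only on the inputs already known about $\Gamma$: finite presentation, simplicity, and finiteness of $\Out(\Gamma)$.
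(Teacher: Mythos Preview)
Your overall plan---reduce everything to citations from \cite{BFFHZ} once $\Gamma$ is known to be finitely presented, simple, and mixed identity-free---is exactly what the paper does. Two points deserve correction, though.

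First, the ``only additional task'' you identify is not actually additional: mixed identity-freeness of simple Burger--Mozes groups is already recorded in \cite[Remark~3.5]{BFFHZ}, so there is nothing to prove here and no need for the Baire-category argument you sketch (which in any case remains speculative as written). The paper simply cites that remark.

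Second, and more substantively, your account of oligomorphicity is off. You attribute it to finiteness of $\Out(\Gamma)$ together with ``rigidity of $\Gamma$'', but neither of these is the operative hypothesis. In \cite[Proposition~2.3]{BFFHZ} the action of $G$ on $S$ is shown to be \emph{highly transitive}---hence oligomorphic---using only the simplicity of $\Gamma$. Finiteness of $\Out(\Gamma)$ plays no role in this step; it enters elsewhere in the paper (e.g.\ in showing $G$ is virtually of type $\F$), but not here. So while your proof-by-citation would still go through, the gloss you give on why oligomorphicity holds misidentifies the mechanism.
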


\begin{proof}
    $G$ is finitely presented because $\Gamma$ is \cite[Proposition 1.1]{BFFHZ}. $\Gamma$ is mixed identity-free \cite[Remark 3.5]{BFFHZ}, which implies that the action is faithful \cite[Lemma 2.2]{BFFHZ}. The action is oligomorphic, and even highly transitive, because $\Gamma$ is simple \cite[Proposition 2.3]{BFFHZ}. Finally, point stabilisers are finitely generated by \cite[Proposition 2.6]{BFFHZ}.
\end{proof}

What distinguishes Burger--Mozes groups from other finitely presented simple groups considered in \cite{BFFHZ} is the following:

\begin{proposition}
\label{aut type F}
    Let $\Gamma$ be a simple Burger--Mozes group, and let $G$ be as above. Then $G$ is virtually of type $\F$. In particular, $G$ belongs to $\HF$, and it is of type $\FP_\infty$.
\end{proposition}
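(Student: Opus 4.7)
The plan is to embed $G = \Aut_\Gamma(\Gamma*F_n)$ as a finite-index subgroup of $\Out(\Gamma * F_n)$ and then invoke virtual-$\F$-ness results for outer automorphism groups of free products.

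First, I would show the quotient map $\Aut(\Gamma * F_n) \to \Out(\Gamma * F_n)$ is injective on $G$. If $\phi = \mathrm{conj}_g \in G$, then $g$ centralises $\Gamma$. A short Bass--Serre argument on the tree of the splitting $\Gamma * F_n$ shows that the centraliser of any non-trivial element of the free factor $\Gamma$ lies inside $\Gamma$; thus $g \in C_\Gamma(\Gamma) = Z(\Gamma) = 1$, using that $\Gamma$ is infinite and simple, hence centreless.

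Next, I would identify the image with the kernel of a natural surjection $\Out(\Gamma * F_n) \twoheadrightarrow \Out(\Gamma)$. Since $\Gamma$ is freely indecomposable and non-cyclic, Grushko's theorem forces every automorphism to send $\Gamma$ to a conjugate of itself. Choosing representatives $\phi$ with $\phi(\Gamma) = \Gamma$ defines $\phi|_\Gamma \in \Aut(\Gamma)$ well modulo $\Inn(\Gamma)$, since the normaliser of $\Gamma$ in $\Gamma * F_n$ is $\Gamma$ itself (the stabiliser of a unique vertex in the Bass--Serre tree). The map is surjective by extending any $\alpha \in \Aut(\Gamma)$ via the identity on $F_n$, and its kernel is $G$ by the previous step. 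Since $\Out(\Gamma)$ is finite by Proposition~\ref{prop:out:finite}, $G$ is a finite-index subgroup of $\Out(\Gamma * F_n)$.

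Finally, I would invoke the theory of outer space for free products: because $\Gamma$ is itself of type $\F$ (as a torsion-free cocompact lattice in $\Aut(T_1) \times \Aut(T_2)$ acting freely on the CAT(0) cube complex $T_1 \times T_2$) and $\Out(\Gamma)$ is finite, the Guirardel--Levitt machinery yields that $\Out(\Gamma * F_n)$ is virtually of type $\F$. Hence so is its finite-index subgroup $G$; in particular $G$ belongs to $\HF$ (being virtually $\F$ gives finite virtual cohomological dimension) and is of type $\FP_\infty$. The main obstacle is precisely this last step: pinpointing in the free-product outer-space literature (Guirardel--Levitt, McCullough--Miller, Krsti\'c--Vogtmann) a clean reference for virtual $\F$-ness of $\Out(\Gamma * F_n)$ under the sole hypotheses that $\Gamma$ is of type $\F$ and $\Out(\Gamma)$ is finite.
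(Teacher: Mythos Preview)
Your approach is essentially identical to the paper's: embed $G$ as a finite-index subgroup of $\Out(\Gamma*F_n)$ via the kernel of the Grushko-induced map $\Out(\Gamma*F_n)\to\Out(\Gamma)$ (finite by Proposition~\ref{prop:out:finite}), then invoke Guirardel--Levitt. The reference you were unsure of is exactly the one the paper uses, \cite[Theorem~5.2]{GLouter}, which applies precisely because $\Gamma$ is freely indecomposable, of type $\F$, and has finite $\Out$.
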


\begin{proof}
    By Grushko's Theorem, every automorphism of $\Gamma*F_n$ sends $\Gamma$ to a conjugate of itself. Composing by an inner automorphism yields an automorphism of $\Gamma$. This defines a homomorphism $\Out(\Gamma*F_n) \to \Out(\Gamma)$. The kernel coincides with the image of $G$ inside $\Out(\Gamma*F_n)$. Since $G$ intersects $\Inn(\Gamma*F_n)$ trivially, by Proposition~\ref{prop:out:finite} we can now identify $G$ with a finite index subgroup of $\Out(\Gamma*F_n)$.

    Therefore it remains to see that $\Out(\Gamma*F_n)$ is virtually of type $\F$. This follows from a theorem of Guirardel and Levitt \cite[Theorem 5.2]{GLouter}, which applies because $\Gamma \cong \Inn(\Gamma)$ is freely indecomposable, of type $\F$, and $\Out(\Gamma)$ is finite (Proposition~\ref{prop:out:finite}).
\end{proof}

By contrast, if $\Gamma$ is a Thompson-like finitely presented simple group, then it contains Thompson's group $F$, which is not in $\HF$ \cite[End of Section 1]{kropholler:HF}, and hence $G=\Aut_\Gamma(\Gamma*F_n)$ is not in $\HF$.

\begin{proof}[Proof of Corollary~\ref{Burger Mozes}]
    Let $\Gamma$ be a simple Burger--Mozes group, and let $G = \Aut_\Gamma(\Gamma*F_n)$ act on $S = \Hom_\Gamma(\Gamma*F_n, \Gamma)$ by precomposition. The twisted Brin--Thompson group $SV_G$ exists because the action is faithful (Proposition~\ref{prop action properties}). Proposition~\ref{prop action properties} gives all of the hypotheses necessary for the criterion in \cite[Theorem A]{zaremsky_fp_tbt}, which shows that $SV_G$ is finitely presented. However, $SV_G$ is not of type $\FP_\infty$: this follows from the second item of Theorem~\ref{intro thm constructions}, which applies because $G$ is in $\HF$, by Proposition~\ref{aut type F}.
\end{proof}

\appendix

\section{Finiteness properties of (graph-)wreath products}
\label{appendix}

In this appendix, we recall and slightly improve on the criteria for the finiteness properties of permutational wreath products from \cite{bartholdi15}, and of graph-wreath products from \cite{kropholler:martino}: see Theorems~\ref{wreath FPn} and~\ref{graph-wreath Fn}. 

The key insight is to make use of certain quasi-retractions of groups. Recall that a \emph{quasi-retraction} $\rho\colon X\to Y$ of metric spaces is a coarse Lipschitz map that admits a \emph{quasi-section}, meaning a coarse Lipschitz map $\iota\colon Y\to X$ such that $\rho\circ \iota$ is uniformly close to the identity on $Y$. Call $Y$ a \emph{quasi-retract} of $X$ if there exists a quasi-retraction $X\to Y$. If moreover $\rho\circ \iota$ equals the identity, we will refer to $\rho$ as a quasi-retraction \emph{with an honest section}, and refer to $Y$ as a quasi-retract of $X$ \emph{with an honest section}. If $G$ is a group of type $\FP_n$ (respectively $\F_n$) with a word metric coming from a finite generating set, then every quasi-retract of $G$ is also of type $\FP_n$ (respectively $\F_n$) \cite{alonso94}.

Our improvement to the results in \cite{bartholdi15} and \cite{kropholler:martino} is to replace the hypothesis that the base group has infinite abelianisation by the hypothesis that the base group is infinite. This is made possible by the following observation, due to Corentin Bodart \cite{corentin} (see also \cite[Lemma 3.2]{corentin:citation} for a similar statement).

\begin{proposition}
\label{corentin}
    Let $B$ be a finitely generated infinite group. Then there exists a quasi-retraction $B \to \Z$ with an honest section.
\end{proposition}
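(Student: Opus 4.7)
The plan is to produce a bi-infinite geodesic $\delta : \Z \to B$ in the Cayley graph of $B$ and use it to build both the section and the retraction, after fixing a finite symmetric generating set $S$ with the associated left-invariant word metric. The existence of such a $\delta$ is the main input. For each $n \geq 0$, let $T_n$ denote the set of bi-geodesics $\sigma : [-n, n] \cap \Z \to B$ with $\sigma(0) = 1$. Each $T_n$ is non-empty: since $B$ is infinite and finitely generated, its Cayley graph has infinite diameter, so there is a geodesic segment $\sigma : [0, 2n] \to B$ with $\sigma(0) = 1$; left-translating by $\sigma(n)^{-1}$ and re-indexing yields an element of $T_n$. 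The collection $\bigsqcup_{n \geq 0} T_n$ forms a rooted tree (root in $T_0$, with the parent of an element of $T_{n+1}$ being its restriction to $[-n, n] \cap \Z$) whose branching at each node is bounded by $|S|^2$, so König's lemma produces an infinite ray, which is the desired bi-infinite geodesic $\delta$.

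With $\delta$ in hand, set $\iota(n) := \delta(n)$; this is an isometric embedding of $\Z$ into $B$, in particular coarse Lipschitz. For the retraction, I would define
\[
\rho(b) := \liminf_{m \to \infty} \bigl(m - d(b, \delta(m))\bigr).
\]
Each term in the sequence is an integer bounded in absolute value by $|b|_S$, so the $\liminf$ is a well-defined integer. The map $\rho$ is $1$-Lipschitz because the sequences defining $\rho(b)$ and $\rho(b')$ differ termwise by at most $d(b, b')$ for every $m$. Finally, for any $n \in \Z$ and any $m \geq n$ we have $d(\delta(n), \delta(m)) = m - n$, so the sequence defining $\rho(\delta(n))$ is eventually constantly $n$; hence $\rho \circ \iota = \id_\Z$, giving the honest section.

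The main obstacle is the first step: the existence of the bi-infinite geodesic uses both the infiniteness of $B$ (to guarantee non-emptiness of each $T_n$) and the translation symmetry of the Cayley graph (to recenter geodesic segments about the identity). Once that is in place, defining $\iota$ and $\rho$ and verifying their properties are routine, and the resulting $(\iota, \rho)$ automatically gives a quasi-isometric embedding of $\Z$ into $B$ with coarse Lipschitz retraction, as required by the definition of quasi-retraction with honest section.
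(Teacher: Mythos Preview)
Your proof is correct and follows essentially the same approach as the paper: fix a bi-infinite geodesic $(x_i)$ through the identity, take $\iota(i)=x_i$, and define $\rho(b)$ as (a variant of) $\lim_{i\to\infty}(i-d(b,x_i))$. The only cosmetic differences are that you supply the K\"onig's lemma argument for the existence of the bi-infinite geodesic (which the paper takes for granted) and use $\liminf$ in place of $\lim$, whereas the paper observes that the sequence $(i-d(b,x_i))$ is non-decreasing and bounded, hence genuinely convergent.
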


It is important here that the composition $\Z \to B \to \Z$ is the identity, and not just close to the identity. This stronger property will be needed in the proof of Corollary~\ref{retraction wreath}: see Remark~\ref{remark quasiretraction section}.

\begin{proof}
    Let $(x_i)_{i \in \Z}$ be a bi-infinite geodesic in a fixed Cayley graph of $B$, with $x_0 = 1$. Define $\rho \colon B \to \Z$ by
    \[
    \rho(b) = \lim\limits_{i \to \infty} \left(d_B(x_i, 1) - d_B(x_i, b)\right) = \lim\limits_{i \to \infty} \left(i - d_B(x_i, b)\right).
    \]
    The limit exists: by the triangle inequality we have
    $d_B(x_{i+1}, b) \leq d_B(x_i, b) + d(x_i, x_{i+1})$, so $d_B(x_i, b) - d_B(x_{i+1}, b) \geq -1$. Hence the sequence $(i - d_B(x_i, b))_i$ is non-decreasing. Since it is also bounded above by $d_B(1,b)$ (again by the triangle inequality), the sequence converges. One observes that $\rho$ is $1$-Lipschitz, and admits an honest section $\iota \colon \Z \to B$ defined by $\iota(i) = x_i$.
\end{proof}

Soon we will need to directly manipulate elements of graph-wreath products, so let us pin down some precise details. An element of $B^X$ is a function $f\colon X^{(0)}\to B$ satisfying $f(v)=1$ for all but finitely many $v\in X^{(0)}$. The (left) action of $G$ on $B^X$ has $g\in G$ send $f\in B^X$ to $f\circ g^{-1}$. For $b\in B$ and $v\in X^{(0)}$, let us write $b_v \in B^X$ for the element defined by $b_v(v)=b$ and $b_v(v')=1$ for all $v'\ne v$. Note that under the action of $G$ on $B^X$, $g$ sends $b_v$ to $b_v\circ g^{-1}=b_{g(v)}$. If $A\subseteq B$ write $A_v=\{a_v\mid a\in A\}$. Write elements of $B \grwr_X G$ as ordered pairs $(f,g)$ for $f\in B^X$ and $g\in G$, so the group operation is given by $(f,g)(f',g')=(f (f'\circ g^{-1}) , gg')$. Note that $B\grwr_X G$ is generated by $G$ together with the elements $(b_v,1)$ for $b\in B$ and $v$ coming from a set of representatives of $G$-orbits in $X^{(0)}$.

\begin{corollary}
\label{retraction wreath}
    Suppose that $B$ and $G$ are finitely generated groups, with $B$ infinite, and $X$ is a flag complex on which $G$ acts simplicially and with finitely many orbits of vertices. Then there exists a quasi-retraction $B \grwr_X G \to \Z \grwr_X G$ with an honest section.

    In particular, if $S$ is a set on which $G$ acts with finitely many orbits, then there exists a quasi-retraction $B \wr_S G \to \Z \wr_S G$ with an honest section.
\end{corollary}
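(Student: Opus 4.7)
The plan is to promote the quasi-retraction $\rho\colon B\to\Z$ with honest section $\iota\colon\Z\to B$ from Proposition~\ref{corentin} to the (graph-)wreath product level, leaving the $G$-component untouched. First I would fix finite generating sets $T_G$ of $G$ and $T_B$ of $B$, and a finite set $V_0$ of $G$-orbit representatives of vertices of $X$ (which exists by hypothesis). Then $B \grwr_X G$ is generated by the finite set $T_G \cup \{(t_v,1) : t \in T_B,\ v \in V_0\}$, and $\Z \grwr_X G$ analogously using the generator $1$ of $\Z$; I equip both groups with the induced word metrics.

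I would then define $R\colon B \grwr_X G \to \Z \grwr_X G$ and $I\colon \Z \grwr_X G \to B \grwr_X G$ by extending $\rho$ and $\iota$ to the base coordinate-wise, both fixing the $G$-part. In the wreath product case, elements of the base are literally finitely-supported functions, and one simply sets $R(f,g) = (\rho\circ f, g)$ and $I(f,g) = (\iota\circ f, g)$; finite support is preserved because $\rho(1_B)=0$ and $\iota(0)=1_B$. In the graph-wreath case the base $B^X$ is non-abelian, so I would fix Green's normal form, write each element as a product of syllables $b^{(i)}_{v_i}$, and apply $\rho$ or $\iota$ to the $B$-component of each syllable; well-definedness follows because the commutation relations in $B^X$ and $\Z^X$ both come from adjacency in $X$. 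The identity $R \circ I = \id_{\Z \grwr_X G}$ then follows immediately from $\rho\circ\iota = \id_\Z$ applied syllable-by-syllable, so $I$ is an honest section of $R$. This is exactly where the honest section property of Proposition~\ref{corentin} is crucial: a merely coarse inverse would accumulate errors unboundedly across the infinitely many vertices of $X$.

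The hard part is verifying that $R$ and $I$ are coarse Lipschitz for the chosen word metrics. I would argue heuristically that a word of length $L$ in the generators of $B \grwr_X G$ reads as a length-$L$ walk in the Cayley graph of $G$ that deposits letters of $T_B$ in the local copy of $B$ at the current vertex of $X$; applying the $1$-Lipschitz map $\rho$ to each deposit rewrites the $B$-payload as a $\Z$-payload of bounded expansion, yielding a word for $R(f,g)$ of length at most $cL$ for a constant $c$ depending only on the generating sets and on the Lipschitz constant of $\iota$. The graph-wreath case is similar syllable-by-syllable, using that Green's normal form has length bounded by the word length in $B^X$. Together with the honest section identity above, this gives the required quasi-retraction with honest section.
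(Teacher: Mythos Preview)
Your setup, the definition $R(f,g)=(\rho\circ f,g)$, and the honest-section identity $R\circ I=\id$ all match the paper's argument. The genuine gap is in your coarse Lipschitz step, and it is not just incompleteness: the heuristic as stated is wrong.

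You propose to take a word of length $L$ for $(f,g)$ and replace each $B$-deposit $c$ by the integer $\rho(c)$, claiming this yields a word for $R(f,g)$. It does not. If several deposits $c_1,\ldots,c_k$ land (via the $G$-walk) at the same vertex $u$, your rewrite places $\rho(c_1)+\cdots+\rho(c_k)$ at $u$, whereas $R(f,g)$ has $\rho(c_1\cdots c_k)$ at $u$; since $\rho$ is not a homomorphism these differ. So your rewritten word represents some \emph{other} element of $\Z\grwr_X G$, and you have not bounded the length of $R(f,g)$. (One can salvage this by separately bounding the discrepancy, summing $|\rho(c_1\cdots c_j)-\rho(c_1\cdots c_{j-1})-\rho(c_j)|$ over all deposits to get an extra $O(L)$ term, but that work is missing.) The same objection applies verbatim to your graph-wreath paragraph: applying $\rho$ syllable-by-syllable to an arbitrary word is not the same as applying it to the Green normal form.

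The paper sidesteps this entirely by checking the one-step increment rather than rewriting whole words. For each generator $a$ it verifies $R(wa)=R(w)\cdot(\text{element of bounded length})$: for $a=(1,c)$ with $c\in C_G$ one gets $R(wa)=R(w)(1,c)$, and for $a=(c_v,1)$ with $c\in C_B$ one computes
\[
R\bigl((f,g)(c_v,1)\bigr)=R(fc_{g(v)},g)=(\rho\circ f,g)\,(z_v,1)=R(w)\,(z_v,1),
\]
where $z=\rho(f(g(v)))^{-1}\rho(f(g(v))c)$ has length bounded by the Lipschitz constant of $\rho$. Iterating over the letters of a word gives the Lipschitz bound for $R$. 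This is precisely the computation your heuristic skips: the point is that the bounded error arises once per generator, at a single vertex, rather than being spread across an unknown decomposition of $f$.

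A minor secondary remark: your well-definedness claim for $R$ in the graph-wreath case (``the commutation relations both come from adjacency'') only handles swaps of commuting syllables, not merging of syllables at the same vertex, where again $\rho(bb')\ne\rho(b)+\rho(b')$. Fixing a specific Green normal form resolves this, but you should say so.
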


\begin{proof}
    The case of permutational wreath products follows from that of graph-wreath products, by taking $X$ to be the full simplex on $S$. Our hypotheses ensure that $W = B \grwr_X G$ is finitely generated \cite[Lemma 2.5]{kropholler:martino}, and a finite generating set $A$ can be built as follows: fix a finite generating set $C_B$ for $B$, a finite generating set $C_G$ for $G$, and a finite set $V$ of orbit representatives for the action of $G$ on the vertex set of $X$. The generating set $A$ of $W$ now consists of all $(c_v,1)$ for $c\in C_B$ and $v\in V$ together with all $(1,c)$ for $c\in C_G$. Similarly, denoting $\Z = \langle t \rangle$ multiplicatively, the group $\Z \grwr_X G$ is generated by a finite set $A'$ consisting of the elements $(t_v, 1)$ for $v \in V$, together with $(1, c)$ for $c \in C_G$.

    Let $\rho \colon B \to \Z$ be the quasi-retraction from Proposition~\ref{corentin}. This naturally extends to a map $\sigma \colon B \grwr_X G \to \Z \grwr_X G$ via
    \[
    \sigma(f,g) \coloneqq (\rho\circ f,g).
    \]
    We claim that, when $B \grwr_X G$ is endowed with the generating set $A$, and $\Z \grwr_X G$ is endowed with the generating set $A'$, the map $\sigma$ is a quasi-retraction with an honest section. Since $\rho\colon B\to\Z$ has an honest section, so does $\sigma$. Thus we are left to show that $\sigma$ is coarse Lipschitz, i.e., that there is a uniform bound on the distance in $\Z \grwr_X G$ between $\sigma(wa)$ and $\sigma(w)$, for all $w\in W$ and $a\in A$. Say $w=(f,g)$, and first suppose $a=(1,c)$ for $c\in C_G$. Then
    \[
    \sigma(wa) = \sigma((f,g)(1,c)) = \sigma(f,gc) = (\rho\circ f,gc)=(\rho\circ f,g)(1,c) = \sigma(w)(1,c).
    \]
    Now suppose $a=(c_v,1)$ for $c\in C_B$ and $v\in V$. Then
    \begin{align*}
    \sigma(wa) &= \sigma((f,g)(c_v,1)) = \sigma(f c_{g(v)},g) \\
    &= (\rho\circ(f c_{g(v)}),g) = ((\rho\circ f)z_{g(v)},g) \\
    &= (\rho\circ f,g)(z_v,1) = \sigma(w)(z_v,1),
    \end{align*}
    where $z=(\rho(f(g(v))))^{-1}\rho(f(g(v))c) \in \Z$. Since $\rho$ is coarse Lipschitz, $z$ is at uniformly bounded distance to 1, and we conclude that $\sigma(wa)$ is at uniformly bounded distance to $\sigma(w)$.
\end{proof}

\begin{remark}
\label{remark quasiretraction section}
    If $\rho \colon B \to \Z$ were simply a quasi-retraction, then $\sigma$ might not be a quasi-retraction, because the finite errors of $\rho$ can add up along the direct sum. For a concrete example, consider the embedding $\rho \colon \{1\} \to \Z/2$, which is a quasi-retraction (that does not admit a section). Then the corresponding map $\sigma \colon \{1\} \wr \Z \to \Z/2 \wr \Z$ is not a quasi-retraction: indeed $\{1\} \wr \Z \cong \Z$, and every quasi-retract of $\Z$ is finitely presented, unlike $\Z/2 \wr \Z$.
\end{remark}

We also include another observation, which does not improve on the finiteness properties but does streamline a step of the proofs.

\begin{lemma}
\label{retraction base}
    Suppose that $B$ and $G$ are finitely generated groups, and $X$ is a flag complex on which $G$ acts simplicially and with finitely many orbits of vertices. Then there exists a quasi-retraction $B \grwr_X G \to B$. In particular, if $S$ is a set on which $G$ acts with finitely many orbits, then there exists a quasi-retraction $B \wr_S G \to B$.
\end{lemma}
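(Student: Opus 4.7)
The plan is to pick a basepoint vertex and project each element of $B\grwr_X G$ to the $B$-coordinate at that basepoint, while embedding $B$ back in as the basepoint coordinate. Concretely, fix a vertex $v_0\in X^{(0)}$; using that $G$ has finitely many orbits on vertices, take a finite set $V$ of orbit representatives and, after possibly enlarging $V$ by a single element, assume $v_0\in V$. With finite generating sets $C_B$ of $B$ and $C_G$ of $G$, equip $W=B\grwr_X G$ with the finite generating set $A=\{(c_v,1)\mid c\in C_B,\ v\in V\}\cup\{(1,c)\mid c\in C_G\}$, as in Corollary~\ref{retraction wreath}.

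Define $\rho\colon W\to B$ by $\rho(f,g)=f(v_0)$ and $\iota\colon B\to W$ by $\iota(b)=(b_{v_0},1)$. The identity $\rho\circ\iota=\id_B$ is immediate, so $\iota$ is an honest section. For $\iota$ to be coarse Lipschitz it suffices to note that for each generator $c$ of $B$ we have $\iota(b)\cdot(c_{v_0},1)=\iota(bc)$, and $(c_{v_0},1)\in A$ because $v_0\in V$; so $\iota$ is in fact an isometric embedding (if we had not arranged $v_0\in V$, we would instead write $(c_{v_0},1)=(1,g)(c_{v},1)(1,g^{-1})$ for a fixed $g$ with $g(v)=v_0$, giving a uniform bound).

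For $\rho$ to be coarse Lipschitz, I would verify a uniform bound on $d_B(\rho(wa),\rho(w))$ for $w\in W$ and $a\in A$. The calculations mirror those of the proof of Corollary~\ref{retraction wreath}: right multiplying $(f,g)$ by $(1,c)$ leaves $\rho$ unchanged, while right multiplying by $(c_v,1)$ replaces $f(v_0)$ by $f(v_0)\cdot c_{g(v)}(v_0)$, which either equals $f(v_0)$ (if $g(v)\ne v_0$) or equals $f(v_0)c$ (if $g(v)=v_0$). In either case the change is of length at most $\max_{c\in C_B}|c|_B$.

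There is essentially no hard step: the argument is a direct unpacking of the wreath product multiplication, and the only mild subtlety is ensuring that a generator of the form $(c_{v_0},1)$ lies in (or is uniformly bounded in terms of) our chosen generating set of $W$, which is handled by the choice of orbit representatives. The final sentence of the lemma, about permutational wreath products, follows by specialising $X$ to the full simplex on $S$.
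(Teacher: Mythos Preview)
Your proof is correct and follows essentially the same approach as the paper: fix a base vertex, project $(f,g)\mapsto f(v_0)$, and embed $B$ back at that coordinate, checking coarse Lipschitz by the same case analysis on generators. You are in fact slightly more careful than the paper in verifying that the section $\iota$ is coarse Lipschitz (by arranging $v_0\in V$, or else conjugating), whereas the paper simply declares the section ``clear''.
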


We will use this in conjunction with \cite{alonso94}, to show that $B$ inherits the finiteness properties of $B \grwr_X G$. This improves on \cite[Proposition 5.2]{bartholdi15}, which reaches the same conclusion under stronger hypotheses.

\begin{proof}
    Fix some $u\in X^{(0)}$. Let $\beta\colon B \grwr_X G \to B$ be the function $\beta(f,g)=f(u)$. Keeping the same notation as in the proof of Corollary~\ref{retraction wreath}, we endow $B$ with the generating set $C_B$ and $B \grwr_X G$ with the generating set $A$, and claim that $\beta$ is a quasi-retraction. Clearly $\beta$ has an honest section $b\mapsto (b_u,1)$. Now let $w \in W=B\grwr_X G$ and $a \in A$; we need to prove that $\beta(wa)$ is uniformly close to $\beta(w)$. First suppose $a=(1,c)$ for $c\in C_B$. Then
    \[\beta(wa)=\beta((f,g)(1,c))=\beta(f,gc)=f(u)=\beta(w).\]
    Next suppose $a=(c_v,1)$ for $c\in C_B$ and $v\in V$. Then
    \[\beta(wa) = \beta((f,g)(c_v,1)) = \beta(f c_{g(v)},g) = f(u) c_{g(v)}(u).\]
    If $u\ne g(v)$ then this equals $f(u)=\beta(f,g)$. If $u=g(v)$ then this equals $f(u)c=\beta(f,g)c$, and we are done.
\end{proof}

We can now prove the main results of the appendix. We start with permutational wreath products.

\begin{theorem}
\label{wreath FPn}
    Let $W = B \wr_S G$ be a permutational wreath product, where $B$ is infinite. Then the following are equivalent:
    \begin{itemize}
        \item $W$ is of type $\FP_n$;
        \item $B$ and $G$ are of type $\FP_n$, and for all $1 \leq i \leq n$, $G$ acts on $S^i$ with stabilisers of type $\FP_{n-i}$ and finitely many orbits.
    \end{itemize}
\end{theorem}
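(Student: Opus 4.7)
The plan is to reduce the general case to the case $B=\Z$ via the quasi-retractions established in the previous results, and then invoke the criterion of Bartholdi--Cornulier--Kochloukova \cite{bartholdi15}, which applies to $\Z \wr_S G$ since $\Z$ has infinite abelianisation.

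For the $(\Leftarrow)$ direction, the strategy is simply to quote \cite{bartholdi15}. A reading of the proof of the sufficient direction there shows that it constructs a classifying space (or, for $\FP_n$, a suitable resolution) for $B \wr_S G$ using only the stated conditions on $B$, $G$, and the $G$-action on $S^i$; no hypothesis on the abelianisation of $B$ is needed for this direction. The infinite abelianisation assumption of \cite{bartholdi15} enters only in the converse.

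For the $(\Rightarrow)$ direction, suppose $W=B\wr_S G$ is of type $\FP_n$. In particular, $W$ is finitely generated, and since $B$ is non-trivial, a standard argument forces $G$ to be finitely generated, $G$ to act on $S$ with finitely many orbits, and $B$ to be finitely generated. The semidirect product decomposition exhibits $G$ as a retract of $W$, so $G$ is of type $\FP_n$. By Lemma~\ref{retraction base}, $B$ is a quasi-retract of $W$, so by \cite{alonso94}, $B$ is of type $\FP_n$. Now applying Corollary~\ref{retraction wreath}---which uses that $B$ is finitely generated and infinite---we see that $\Z \wr_S G$ is a quasi-retract of $W$, and is therefore itself of type $\FP_n$. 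The necessary direction of \cite{bartholdi15} applied to $\Z\wr_S G$ then gives, for each $1\le i\le n$, finitely many $G$-orbits on $S^i$ and stabilisers of type $\FP_{n-i}$, as required.

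The main obstacle is the reduction to $B=\Z$ in the $(\Rightarrow)$ direction, and the key technical ingredient is Corollary~\ref{retraction wreath}. It is crucial that the quasi-section provided there is \emph{honest} rather than merely approximate: as Remark~\ref{remark quasiretraction section} illustrates, an approximate quasi-section at the level of bases would cause errors to accumulate across the direct sum, so that the induced map of wreath products would fail even to be coarsely Lipschitz. This is precisely what Proposition~\ref{corentin}, via Bodart's construction of an honest section $\Z\to B$, is designed to supply.
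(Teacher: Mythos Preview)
Your proposal is correct and follows essentially the same route as the paper: cite \cite[Proposition~5.1]{bartholdi15} for the sufficiency direction, and for necessity first extract finite generation of $B$, $G$ and cofiniteness of the action, then use Lemma~\ref{retraction base} and Corollary~\ref{retraction wreath} (with \cite{alonso94}) to reduce to $\Z\wr_S G$ and invoke \cite[Theorem~A]{bartholdi15}. The only cosmetic difference is that you obtain $G$ of type $\FP_n$ directly from the semidirect-product retraction, whereas the paper reads it off from the conclusion of \cite{bartholdi15} applied to $\Z\wr_S G$.
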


This is proved in \cite[Theorem A]{bartholdi15}, under the assumption that $B$ has infinite abelianisation.

\begin{proof}
    If the second bulletpoint holds, then $W$ is of type $\FP_n$, by \cite[Proposition 5.1]{bartholdi15}. Conversely, suppose that $W$ is of type $\FP_n$. Because $B$ is a quasi-retract of $W$ by Lemma~\ref{retraction base}, it is also of type $\FP_n$, by \cite{alonso94}. The statement is trivial for $n = 0$, so we assume that $n \geq 1$. Then $W$ is finitely generated, and therefore $B$ and $G$ are finitely generated and the action of $G$ on $S$ has finitely many orbits \cite[Proposition 2.1]{cornulier06}. By Corollary~\ref{retraction wreath}, there is a quasi-retraction $W \to \Z \wr_S G$, and therefore $\Z \wr_S G$ is of type $\FP_n$ by \cite{alonso94}. It now follows from \cite[Theorem A]{bartholdi15} that $G$ is of type $\FP_n$, and for all $1 \leq i \leq n$ it acts on $S^i$ with stabilisers of type $\FP_{n-i}$ and finitely many orbits.
\end{proof}

Let us also directly record the version where the bulleted statements hold for all $n$:

\begin{corollary}
\label{wreath FPinf}
    Let $W = B \wr_S G$ be a permutational wreath product, where $B$ is infinite. Then the following are equivalent:
    \begin{itemize}
        \item $W$ is of type $\FP_\infty$;
        \item $B$ and $G$ are of type $\FP_\infty$, and $G$ acts oligomorphically on $S$ with stabilisers of non-empty subsets of type $\FP_\infty$. \qed
    \end{itemize}
\end{corollary}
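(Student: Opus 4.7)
My plan is to deduce Corollary~\ref{wreath FPinf} directly from Theorem~\ref{wreath FPn} by letting the finiteness parameter $n$ tend to infinity, together with a brief translation between the tuple-based formulation of Theorem~\ref{wreath FPn} and the subset-based formulation of the corollary.

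For the forward direction, I would assume $W$ is of type $\FP_\infty$, so $W$ is of type $\FP_n$ for every $n \geq 1$. Applying Theorem~\ref{wreath FPn} at each such $n$ immediately gives that $B$ and $G$ are $\FP_n$ for every $n$, hence $\FP_\infty$. Oligomorphicity follows by taking $n = i$ for each $i \geq 1$, which yields finitely many $G$-orbits on $S^i$ and hence on the $i$-element subsets of $S$. To show that the setwise stabiliser of any non-empty finite subset $T \subseteq S$ is $\FP_\infty$, I would set $i = |T|$ and, for each $m \geq 0$, apply Theorem~\ref{wreath FPn} with $n = i + m$ to conclude that the stabiliser of any tuple enumerating $T$ is $\FP_m$. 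Since these pointwise stabilisers sit as finite-index subgroups inside $\Stab_G(T)$, and type $\FP_m$ passes to and from finite-index subgroups, $\Stab_G(T)$ is $\FP_\infty$.

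For the converse, I would fix an arbitrary $n \geq 1$ and verify the hypotheses of the second bullet of Theorem~\ref{wreath FPn} at level $n$; it would then follow that $W$ is $\FP_n$, and being $\FP_\infty$ is just being $\FP_n$ for all $n$. Clearly $B$ and $G$ are $\FP_n$. For each $1 \leq i \leq n$, oligomorphicity gives finitely many orbits on $i$-element subsets, and therefore finitely many orbits on $S^i$ (each orbit of an $i$-element subset accounts for at most $i!$ orbits of tuples with distinct entries, while tuples with repetitions contribute only finitely many further orbits, inductively from smaller $i$). The stabiliser of a tuple in $S^i$ equals the pointwise stabiliser of its set of entries, a finite-index subgroup of the corresponding setwise stabiliser; by assumption the latter is $\FP_\infty$, hence so is the former, which is in particular $\FP_{n-i}$.

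The hard part — or rather the only non-automatic part — is the bookkeeping around subsets versus tuples; this is handled uniformly by the observation that pointwise and setwise stabilisers differ by a finite index at most $i!$, and that type $\FP_n$ is preserved in both directions under passage to finite-index subgroups. Everything else is a clean invocation of Theorem~\ref{wreath FPn} at each level.
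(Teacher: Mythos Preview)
Your proposal is correct and is exactly the approach the paper has in mind: the corollary is marked with \qed\ and introduced as ``the version where the bulleted statements hold for all $n$'', so the intended proof is precisely to invoke Theorem~\ref{wreath FPn} at every level. You have simply made explicit the translation between the tuple formulation of Theorem~\ref{wreath FPn} and the subset formulation of the corollary (via the finite-index passage between pointwise and setwise stabilisers, and the standard equivalence between orbit-finiteness on $S^i$ and on $i$-element subsets), which the paper leaves implicit.
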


There is also a homotopical version.

\begin{corollary}
\label{wreath Fn}
    Let $W = B \wr_S G$ be a permutational wreath product, where $B$ is infinite. Then the following are equivalent:
    \begin{itemize}
        \item $W$ is of type $\F_n$;
        \item $B$ and $G$ are of type $\F_n$, and for all $1 \leq i \leq n$, $G$ acts on $S^i$ with stabilisers of type $\FP_{n-i}$ and finitely many orbits.
    \end{itemize}
\end{corollary}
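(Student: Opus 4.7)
The plan is to mirror the proof of Theorem~\ref{wreath FPn} line by line, replacing $\FP_n$ by $\F_n$ in each place where it makes sense, and invoking at the key step the homotopical version of \cite[Theorem A]{bartholdi15} rather than the homological one. Throughout, I use the fact from \cite{alonso94} that quasi-retracts of groups of type $\F_n$ are again of type $\F_n$, and that the same is true for type $\FP_n$.

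For the direction from the second bullet to the first, I would cite the homotopical version of \cite[Theorem A]{bartholdi15} (in that paper's framework, the construction in \cite[Proposition 5.1]{bartholdi15} in fact produces a classifying space for $W$ with finite $n$-skeleton, provided one is given classifying spaces with finite $n$-skeleton for $B$ and $G$ and the required finiteness data on stabilisers).

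For the converse, assume that $W$ is of type $\F_n$. The case $n=0$ is trivial, so assume $n \geq 1$. Then $W$ is finitely generated, whence by \cite[Proposition 2.1]{cornulier06} both $B$ and $G$ are finitely generated and $G$ acts on $S$ with finitely many orbits. By Lemma~\ref{retraction base}, $B$ is a quasi-retract of $W$, so $B$ is of type $\F_n$ by \cite{alonso94}. By Corollary~\ref{retraction wreath}, there is a quasi-retraction $W \to \Z \wr_S G$ (with an honest section), so $\Z \wr_S G$ is also of type $\F_n$ by \cite{alonso94}. Since $\Z$ has infinite abelianisation, the homotopical form of \cite[Theorem A]{bartholdi15} applies to $\Z \wr_S G$ and yields that $G$ is of type $\F_n$ and that, for each $1 \leq i \leq n$, the action of $G$ on $S^i$ has finitely many orbits with stabilisers of type $\FP_{n-i}$.

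The argument is essentially formal given the preceding results in the appendix, so I do not expect any genuine obstacle. The only point that requires a little care is the asymmetry in the second bullet, where the base and the group are of the homotopical type $\F_n$ but the stabilisers are only required to be of the homological type $\FP_{n-i}$; this matches the asymmetry in the criterion of \cite{bartholdi15}, and survives the reduction to $\Z \wr_S G$ because the quasi-retractions from Corollary~\ref{retraction wreath} and Lemma~\ref{retraction base} preserve both $\F_n$ and $\FP_n$.
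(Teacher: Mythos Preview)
Your approach is plausible but differs from the paper's, and it leans on a result that is not actually in \cite{bartholdi15}. That paper is specifically about \emph{homological} finiteness properties; there is no ``homotopical form of \cite[Theorem~A]{bartholdi15}'' to cite. You could patch this by observing that $G$ is a quotient of $\Z\wr_S G$ by a finitely normally generated subgroup and hence inherits finite presentability, but as written the step ``the homotopical form of \cite[Theorem~A]{bartholdi15} applies and yields that $G$ is of type $\F_n$'' is a gap.

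The paper avoids this entirely by a different reduction. For $n=1$ the statement coincides with Theorem~\ref{wreath FPn}. For $n\ge 2$ it uses the standard fact \cite[Section~VIII.7]{brown82} that a group is of type $\F_n$ if and only if it is finitely presented and of type $\FP_n$. The $\FP_n$ half is then exactly Theorem~\ref{wreath FPn}, and the finite presentability half is Cornulier's characterisation \cite[Theorem~1.1]{cornulier06} of when $B\wr_S G$ is finitely presented. This route needs no homotopical upgrade of \cite{bartholdi15} and no further use of the quasi-retractions from Corollary~\ref{retraction wreath} or Lemma~\ref{retraction base}; all the work has already been absorbed into Theorem~\ref{wreath FPn}. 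Your route, once repaired, would give the same conclusion but at the cost of re-running the quasi-retraction argument and supplying the missing $\F_n$ statements by hand.
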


\begin{proof}
    For $n = 1$ this is the same statement as Theorem~\ref{wreath FPn}. For $n \geq 2$, a group is of type $\F_n$ if and only if it is finitely presented and of type $\FP_n$ \cite[Section~VIII.7]{brown82}. Therefore the result is a combination of Theorem~\ref{wreath FPn} and \cite[Theorem 1.1]{cornulier06}.
\end{proof}

We obtain a similar result for graph-wreath products.

\begin{theorem}
\label{graph-wreath Fn}
    Let $W = B \grwr_X G$ be a graph-wreath product, where $B$ is infinite. Then the following are equivalent.
    \begin{itemize}
        \item $W$ is of type $\F_n$;
        \item $B$ and $G$ are of type $\F_n$, and for all $1 \leq i \leq n$, $G$ acts cocompactly on the $i$-skeleton of $X$, with the stabiliser of each $i$-simplex of $X$ of type $\FP_{n-i}$.
    \end{itemize}
\end{theorem}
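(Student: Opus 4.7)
The plan is to mirror the proof of Theorem~\ref{wreath FPn}, using the quasi-retraction machinery of Corollary~\ref{retraction wreath} and Lemma~\ref{retraction base} to reduce the nontrivial implication to the known case $B = \Z$, whose infinite abelianisation makes the original criterion of Kropholler--Martino directly applicable.

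For the easy direction (second bullet $\Rightarrow$ first bullet), I will invoke the construction in \cite{kropholler:martino}: under the stated hypotheses the existing argument assembles a classifying space for $W$ with finite $n$-skeleton from classifying spaces for $B$ and for the simplex stabilisers in $G$. Crucially, the infinite abelianisation hypothesis in \cite{kropholler:martino} is only used for the converse, so this forward construction goes through verbatim for any base $B$ of type $\F_n$.

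For the converse, assume $W$ is of type $\F_n$. The case $n = 0$ is vacuous. For $n \geq 1$ the group $W$ is finitely generated, from which I extract three facts: $G$ is finitely generated as a quotient of $W$; $B$ is a quasi-retract of $W$ by Lemma~\ref{retraction base}, hence of type $\F_n$ by \cite{alonso94}; and $G$ acts with finitely many orbits on $X^{(0)}$, by the finite generation criterion \cite[Lemma~2.5]{kropholler:martino}. These three facts are exactly what is needed to invoke Corollary~\ref{retraction wreath}, yielding a quasi-retraction $W \to \Z \grwr_X G$. Thus $\Z \grwr_X G$ is of type $\F_n$, and applying the original Kropholler--Martino theorem to this graph-wreath product delivers the remaining conclusions: $G$ is of type $\F_n$ and for each $1 \leq i \leq n$ the action of $G$ is cocompact on the $i$-skeleton with simplex stabilisers of type $\FP_{n-i}$.

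I expect the main obstacle to be bookkeeping rather than mathematical depth. One must carefully verify that the forward construction in \cite{kropholler:martino} is genuinely insensitive to the abelianisation hypothesis, and that their finite generation lemma extracts finiteness of vertex orbits from finite generation of $W$ under the weaker assumption that $B$ is merely infinite. A conceptual point worth flagging along the way is Remark~\ref{remark quasiretraction section}: the honest section in Proposition~\ref{corentin} is essential, for otherwise the finite errors of the retraction $B \to \Z$ would compound across $B^X$ and the induced map on $W$ would fail to be a quasi-retraction.
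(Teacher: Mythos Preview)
Your proposal is correct and follows essentially the same route as the paper's proof: the forward direction is \cite[Theorem~A]{kropholler:martino} (which indeed does not use the abelianisation hypothesis), and for the converse you use Lemma~\ref{retraction base} to get $B$ of type $\F_n$, then \cite[Lemma~2.5]{kropholler:martino} and Corollary~\ref{retraction wreath} to reduce to $\Z\grwr_X G$, finishing with \cite[Theorem~C]{kropholler:martino}. The only cosmetic difference is that the paper applies Lemma~\ref{retraction base} before handling the case $n=0$, but this is immaterial.
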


\begin{proof}
    If the second bulletpoint holds, then $W$ is of type $\F_n$, by \cite[Theorem A]{kropholler:martino} (note that the third condition in the theorem is equivalent to our assumption on cocompactness and finiteness properties of the stabilisers, by \cite[Lemma 1.8]{kropholler:martino}). Conversely, suppose that $W$ is of type $\F_n$. Because $B$ is a quasi-retract of $W$ by Lemma~\ref{retraction base}, it is also of type $\F_n$, by \cite{alonso94}. The statement is trivial for $n = 0$, so we assume that $n \geq 1$. Then $W$ is finitely generated, and therefore $B$ and $G$ are finitely generated and the action of $G$ on $X$ has finitely many orbits of vertices \cite[Lemma 2.5]{kropholler:martino}. By Corollary~\ref{retraction wreath}, there is a quasi-retraction $W \to \Z \grwr_S G$, and therefore $\Z \grwr_S G$ is of type $\F_n$ by \cite{alonso94}. It now follows from \cite[Theorem C]{kropholler:martino} that $G$ is of type $\F_n$, and for all $1 \leq i \leq n$, the $i$-skeleton of $X$ is cocompact and the stabiliser of each $i$-simplex of $X$ of type $\FP_{n-i}$.
\end{proof}

Even with this improved statement, to the best of our knowledge the case when the base $B$ is finite remains open.

\bibliographystyle{alpha}
\newcommand{\etalchar}[1]{$^{#1}$}

\end{document}